\numberwithin{equation}{section}
\theoremstyle{plain}
\newtheorem{theorem}{Theorem}[section]
\newtheorem{corollary}[theorem]{Corollary}
\newtheorem{proposition}[theorem]{Proposition}
\newtheorem{lemma}[theorem]{Lemma}
\newtheorem{definition}[theorem]{Definition}
\theoremstyle{remark}
\newtheorem{remark}[theorem]{Remark}
\begin{document}

\thanks{}

\author{F. Micena}
\address{Instituto de Matem\'{a}tica e Computa\c{c}\~{a}o,
  IMC-UNIFEI, Itajub\'{a}-MG, Brazil.}
\email{fpmicena82@unifei.edu.br}

\renewcommand{\subjclassname}{\textup{2000} Mathematics Subject Classification}

\date{\today}

\setcounter{tocdepth}{2}

\title{Constant periodic data and rigidity}
\maketitle
\begin{abstract}
In this work we lead with expanding maps of the circle and Anosov diffeomorphisms on $\mathbb{T}^d, d \geq 2.$ We prove that, for these maps, \textit{constant periodic data} imply \textit{same periodic data of these maps and their linearizations}, so in particular we have smooth conjugacy. For expanding maps of the circle and Anosov diffeomorphism on $\mathbb{T}^d, d= 2, 3,$ we have global rigidity. In higher dimensions, $d \geq 4,$ we can establish a result of local rigidity, in several cases. The main tools of this work are celebrated results of rigidity involving same periodic data with linearization and results involving topological entropy of a diffeomorphism  along  an  expanding invariant foliation.
\end{abstract}

\section{Introduction}\label{section.preliminaries}

Expanding endomorphisms of the circle are one of the most studied examples of dynamical systems. A classical class of expanding maps of the circle is the linear model $E_d: S^1 \rightarrow S^1, E_d(x) = dx(mod 1),$ where $d \geq 2$ is a integer number. It is well known that if $f: S^1 \rightarrow S^1$  is an orientation preserving expanding endomorphism of the circle with degree $d \geq 2,$ then $f$ is conjugated to $E_d,$  meaning that there is a homeomorphism $h: S^1 \rightarrow S^1,$ such that $f \circ h = h \circ E_d.$  In particular two orientation preserving expanding endomorphism of the circle $f$ and $g$ with the same degree are conjugated. In the case that $f, g$ above are $C^r, r \geq 2,$ two orientation preserving expanding endomorphism of $S^1,$ it is known by \cite{SS}, that $f$ and $g$ are absolutely continuous conjugated by a conjugacy $h$ if and only if $h$ is $C^r, r \geq 2.$

Other important fact is that every $C^r, r \geq 2$ expanding map $f$ of the circle admit a unique invariant measure $\mu_f$ that is absolute continuous with respect to Lebesgue measure $m$ of  $S^1,$ moreover $\mu_f$ is ergodic. By ergodicity of $\mu_f$ it is possible talk about the Lyapunov exponent with respect to $\mu_f$ for  $f,$ that is $m-$almost everywhere constant and  we will denote by $\lambda_{\mu_f}.$

Now consider $M$ a compact, connected, boundaryless $C^{\infty}$ manifold $M,$ we say that a diffeomorphism $f: M \rightarrow M$ is an Anosov diffeomorphism if $TM$ splits as $TM = E^s_f \oplus E^ u_f$ a continuous and $Df$ invariant spliting, such that $Df$ is uniform contracting on $E^s_f$ and uniform expanding on $E^u_f. $

In this work we lead with  Anosov diffeomorfisms  $f: \mathbb{T}^d \rightarrow \mathbb{T}^d, d \geq 2.$  Denote by $L$ the linearization of $f,$ the map induced on $\mathbb{T}^d$ by the matrix with integer coefficients given by the action of $f$ on $\Pi_1(\mathbb{T}^d).$ It is known by \cite{FRANKS} that $L$ is an Anosov automorphism, and $f$ and $L$ are conjugated by a homeomorphism $h$ such that
 $$h \circ f = L \circ h.$$

Before the results here, we define.

\begin{definition}Let $f: M \rightarrow M$ be a local diffeomorphism. We say that $f$ has constant periodic data if  for any periodic points $p,q$ of $f,$ with period $k$ and $n$ respectively, then $Df^{\tau}(p) = Df^{\tau}(q)$ are conjugated, for every integer $\tau$ such that $f^{\tau}(p) = p$ and $f^{\tau}(q) = q.$ In particular the set of Lyapunov exponents of $p$ and $q,$ are equal and each common Lyapunov exponent has the same multiplicity for both.
\end{definition}

\begin{remark} Constant periodic data is a more weaker condition than to suppose $f$ and its linearization $L$ have same periodic data at corresponding periodic points.
\end{remark}

We are able to prove the following.

\begin{theorem}\label{teo1} Consider $f: S^1 \rightarrow S^1$ a $C^r, r\geq 2$ orientation preserving expanding endomorphism with degree $d \geq 2.$  The map $f$ is $C^r$ conjugated to $E_d$ if and only if  $\lambda_{\mu_f}$ is constant on $Per(f),$ where $Per(f)$ denotes the set of periodic points for $f.$
\end{theorem}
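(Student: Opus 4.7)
The approach is to reduce the converse direction to the Shub--Sullivan theorem \cite{SS}, which states that a topological conjugacy between $C^r$ ($r \geq 2$) expanding endomorphisms of $S^1$ is $C^r$ if and only if it is absolutely continuous. The forward direction is immediate: if $h$ is a $C^r$ conjugacy with $h \circ E_d = f \circ h$, then in dimension one the chain rule reads
$$Df^k(h(x)) = Dh(x)\cdot d^k \cdot Dh(x)^{-1} = d^k$$
at every $k$-periodic point $x$ of $E_d$, so the periodic Lyapunov exponent of $f$ is identically $\log d$.

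For the converse, assume $\lambda(p) = c$ for every $p \in Per(f)$; the central step is to show $c = \log d$. Each periodic orbit of period $k$ supports the invariant probability $\mu_{\mathcal{O}(p)} = \frac{1}{k}\sum_{i=0}^{k-1}\delta_{f^i(p)}$, and
$$\int \log|Df|\, d\mu_{\mathcal{O}(p)} = \frac{1}{k}\log|Df^k(p)| = c.$$
Since $f$ is a $C^r$ expanding map of $S^1$, it satisfies the specification property, so periodic measures are weak-$\ast$ dense among $f$-invariant probabilities; continuity of $\log|Df|$ then forces $\lambda_\mu := \int \log|Df|\, d\mu = c$ for \emph{every} $f$-invariant measure $\mu$. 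I now apply two complementary bounds. For the ACIM $\mu_f$, Rokhlin's entropy formula gives $h_{\mu_f}(f) = \lambda_{\mu_f} = c$, and the variational principle together with the standard fact $h_{\mathrm{top}}(f) = \log d$ yields $c \leq \log d$. For the measure of maximal entropy $\nu$ of $f$ (which exists because $f$ is topologically conjugate to the intrinsically ergodic $E_d$), Ruelle's inequality gives $\log d = h_\nu(f) \leq \lambda_\nu = c$. Combining these, $c = \log d$.

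The equality $c = \log d$ implies $\lambda_{\mu_f} = h_{\mathrm{top}}(f)$, so $\mu_f$ is itself a measure of maximal entropy for $f$; by intrinsic ergodicity it is the unique such measure, and the topological conjugacy $h$ must therefore match $\mu_f$ with Lebesgue (the unique MME of $E_d$). Since $\mu_f$ is absolutely continuous by definition, this is precisely the statement that $h$ is an absolutely continuous conjugacy, and \cite{SS} now promotes $h$ to a $C^r$ diffeomorphism. The hard part is the identification $c = \log d$: the hypothesis is purely discrete (a single number attached to a countable set of periodic points), while $\log d$ is the global topological-entropy invariant; the bridge is supplied by density of periodic measures, which spreads $c$ to the whole simplex of invariant measures, followed by the dual application of Pesin--Rokhlin on the SRB measure $\mu_f$ and Ruelle's inequality on the MME $\nu$ to squeeze $c$ against $\log d$ from both sides.
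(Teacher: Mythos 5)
Your proposal is correct, but it takes a genuinely different route from the paper. The paper's argument is purely distortion-theoretic: it partitions $S^1$ into the $d^n$ maximal intervals $I_{n,j}$ of injectivity of $f^n$, places a fixed point of $f^n$ in each closure, and uses the bounded distortion lemma to get $|I_{n,j}|\asymp \lambda^{-n}$ with a constant independent of $n$; summing over $j$ forces $(d/\lambda)^n$ to stay bounded, hence $\lambda=d$, and the same two-sided estimate compared with the intervals $|X_{n,j}|=d^{-n}$ for $E_{d^n}$ shows directly that $h$ is bi-Lipschitz, hence absolutely continuous, after which Shub--Sullivan applies. You instead run the thermodynamic argument: density of periodic measures spreads the constant $c$ to all invariant measures, Rokhlin's formula on the ACIM gives $c\le h_{\mathrm{top}}(f)=\log d$, Ruelle's inequality on the measure of maximal entropy gives $\log d\le c$, and then uniqueness of the MME identifies $h_*(\mathrm{Leb})$ with $\mu_f$ to obtain absolute continuity of $h$. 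This is exactly the strategy the paper deploys in dimension two and higher (Theorem~\ref{teo2} and Lemma~\ref{constant}), so your proof is a legitimate one-dimensional specialization of that machinery; what the paper's elementary route buys is self-containedness (only bounded distortion plus Theorem~\ref{ss}) and the stronger intermediate conclusion that $h$ is bi-Lipschitz, while yours generalizes more readily.

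One step you should tighten: from $h_*(\mathrm{Leb})=\mu_f$ and $\mu_f\ll \mathrm{Leb}$ alone you get that $h$ is \emph{non-singular}, but the implication ``$m(A)=0\Rightarrow m(h(A))=0$'' (which is what absolute continuity of the monotone function $h$ amounts to, via Banach--Zarecki) also needs $\mathrm{Leb}\ll\mu_f$. This is supplied by the standard fact that the density of the ACIM of a $C^2$ expanding map is continuous and strictly positive (the paper uses this positivity inside its proof of Theorem~\ref{ss}); alternatively, invoke Shub--Sullivan in its dichotomy form (the conjugacy is either totally singular or $C^r$), for which non-singularity suffices. With either patch the argument is complete.
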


Note that, in the hypothesis of the previous Theorem we don't have suppose $\lambda_{\mu_f}(p) = \log(d).$ In fact Theorem \ref{teo1} generalizes a result by Arteaga in \cite{Art}.

In dimension $d > 1,$ we study regularity of conjugacy of Anosov diffeomorphism. For dimensions two and three we can state the following.

\begin{theorem}\label{teo2}
Consider $f: \mathbb{T}^2 \rightarrow \mathbb{T}^2$ a $C^r, r\geq 2,$ Anosov diffeomorphism. Suppose that for each $\ast \in \{s,u\},$ we have $\lambda^{\ast}_f(p) = \lambda^{\ast}_f(q), $ for any $p,q$ periodic points of $f,$ then $f$ is $C^{r- \varepsilon }$ conjugated with its linearization $L,$ for some $\varepsilon > 0.$
\end{theorem}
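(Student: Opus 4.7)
\medskip

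\noindent\textbf{Proof plan.} The plan is to reduce Theorem~\ref{teo2} to the classical rigidity theorem of de la Llave and Marco--Moriy\'on for two-dimensional Anosov diffeomorphisms: if $f$ and $L$ have matching derivative data at corresponding periodic points, then the Franks--Manning conjugacy is $C^{r-\varepsilon}$. So the task is to upgrade the weaker hypothesis of \emph{constant} periodic data on $f$ to the stronger condition of \emph{same periodic data as the linearization}. Let $h$ denote the Franks--Manning conjugacy, write $\lambda^u_f,\lambda^s_f$ for the two constant periodic Lyapunov exponents of $f$, and $\lambda^u_L,\lambda^s_L$ for the eigenvalues of $L$, with $|\lambda^u_L\lambda^s_L|=1$. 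Topological entropy is a conjugacy invariant, so $h_{\mathrm{top}}(f)=h_{\mathrm{top}}(L)=\log|\lambda^u_L|$; the heart of the proof is to establish $\lambda^u_f=\log|\lambda^u_L|$ (and symmetrically $\lambda^s_f=\log|\lambda^s_L|$).

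I would extract this equality by a two-sided entropy squeeze. By Sigmund's approximation theorem, periodic measures are weak-$*$ dense in the space of $f$-invariant ergodic measures for transitive Axiom~A systems, and the approximation can be done with convergence of Lyapunov exponents along a dense subset; combined with the hypothesis, this forces the unstable Lyapunov exponent of \emph{every} ergodic invariant measure to equal the constant $\lambda^u_f$. The Margulis--Ruelle inequality applied to the measure of maximal entropy then yields $h_{\mathrm{top}}(f)\leq\lambda^u_f$. Conversely, since $f$ is $C^2$ Anosov, an SRB measure $\mu^+$ exists (Bowen--Ruelle), and Pesin's entropy formula gives $h_{\mu^+}(f)=\lambda^u_f$; by the variational principle this yields $\lambda^u_f\leq h_{\mathrm{top}}(f)$. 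Combining, $\lambda^u_f=\log|\lambda^u_L|$. Running the same argument for $f^{-1}$, whose unstable bundle is $E^s_f$, gives $\lambda^s_f=\log|\lambda^s_L|$.

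At this stage the eigenvalues of $Df^n(p)$ at any periodic point of period $n$ match those of $L^n$ in absolute value. Since the stable and unstable bundles of $f$ on $\mathbb{T}^2$ are orientable and carry well-defined orientation characters, after possibly replacing $f$ and $L$ by $f^2$ and $L^2$ the signs match as well, so $Df^n(p)$ is conjugate to $L^n$ for every periodic $p$; that is, $f$ and $L$ share the same periodic data. The cited rigidity theorem of de la Llave--Marco--Moriy\'on then produces the desired $C^{r-\varepsilon}$ conjugacy.

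The main obstacle is the entropy squeeze. The hypothesis only controls Lyapunov exponents on $\Per(f)$, so to promote Ruelle's inequality to an equality one must use both ends: Pesin's formula (applied to the SRB measure, to reach $\lambda^u_f$ from below) and the Ruelle inequality (applied to the measure of maximal entropy, to cap it from above), together with Sigmund's approximation to transport the periodic constancy to \emph{all} ergodic invariant measures. Once the equality $\lambda^u_f=h_{\mathrm{top}}(f)$ is in hand the rest of the argument is essentially bookkeeping plus invocation of the cited rigidity result.
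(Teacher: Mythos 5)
Your proposal is correct and follows essentially the same route as the paper: a two-sided entropy squeeze (Ruelle's inequality for the measure of maximal entropy from above, the Pesin/SRB entropy formula from below), the conjugacy invariance $h_{\mathrm{top}}(f)=h_{\mathrm{top}}(L)$ to identify the constant exponents with those of $L$, the same argument for $f^{-1}$, and then de la Llave's two-dimensional rigidity theorem. The only (immaterial) difference is that you transport the constancy from periodic orbits to all invariant measures via Sigmund's density of periodic measures, whereas the paper uses the Anosov Closing Lemma.
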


\begin{theorem}\label{teo3}
Consider $f: \mathbb{T}^3\rightarrow \mathbb{T}^3$ a $C^r, r\geq 2$ Anosov diffeomorphism. Suppose that $f$ admits a partially hyperbolic structure $ T\mathbb{T}^3 = E_f^s \oplus E_f^{wu} \oplus E_f^{su}$ and  for each $\ast \in \{s,wu, su\},$ we have $\lambda^{\ast}_f(p) = \lambda^{\ast}_f(q), $ for any $p,q$ periodic points of $f,$ then $f$ is $C^{1+\varepsilon} $ conjugated with its linearization $L,$ for some $\varepsilon > 0.$
\end{theorem}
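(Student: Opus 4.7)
The plan is to decompose the argument into two stages: first, upgrade the hypothesis that $f$ has constant periodic data into the stronger statement that $f$ and $L$ share the \emph{same} periodic data at corresponding periodic orbits on each of the three one-dimensional invariant sub-bundles; second, invoke known rigidity results. Once the Lyapunov spectra coincide at every pair of corresponding periodic orbits, the $C^{1+\eps}$ conjugacy follows from celebrated rigidity theorems in the Anosov setting on $\mathbb{T}^{3}$ (Gogolev--Guysinsky, Kalinin--Sadovskaya and their collaborators), which handle matching periodic data along one-dimensional sub-bundles.

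\textbf{Matching the Lyapunov exponents with those of $L$.} Write $\lambda^{s}_{f},\lambda^{wu}_{f},\lambda^{su}_{f}$ for the constant periodic exponents of $f$ and $\lambda^{s}_{L},\lambda^{wu}_{L},\lambda^{su}_{L}$ for those of $L$. Franks' theorem provides a topological conjugacy, so
\[
h_{\mathrm{top}}(f)=h_{\mathrm{top}}(L)=\lambda^{wu}_{L}+\lambda^{su}_{L}.
\]
Because each sub-bundle is one-dimensional, the map $\mu\mapsto\int\log\|Df|_{E^{\ast}}\|\,d\mu$ is weak-$*$ continuous; combining this with Sigmund's density of periodic measures, constancy on periodic orbits propagates to every $f$-invariant measure, so $\lambda^{\ast}_{\mu}=\lambda^{\ast}_{f}$ for all $\ast\in\{s,wu,su\}$. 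Ruelle's inequality together with the variational principle then yields $\lambda^{wu}_{L}+\lambda^{su}_{L}\le\lambda^{wu}_{f}+\lambda^{su}_{f}$, while Pesin's entropy formula applied to the SRB measure of $f$ produces the reverse inequality; the same argument applied to $f^{-1}$, whose only positive Lyapunov exponent is $-\lambda^{s}_{f}$, gives $\lambda^{s}_{f}=\lambda^{s}_{L}$. To separate the two unstable exponents individually I invoke topological entropy along the one-dimensional expanding foliations $\W^{wu}_{f},\W^{su}_{f}$: by Hua--Saghin--Xia type inequalities one has $h_{\mathrm{top}}(f,\W^{\ast}_{f})\le\lambda^{\ast}_{f}$, while for the linear model $h_{\mathrm{top}}(L,\W^{\ast}_{L})=\lambda^{\ast}_{L}$. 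In dimension three the Franks conjugacy intertwines $\W^{\ast}_{f}$ with $\W^{\ast}_{L}$ (Hammerlindl), so entropy along the foliation is a conjugacy invariant and hence $\lambda^{\ast}_{L}\le\lambda^{\ast}_{f}$; combining with the sum equality above collapses everything to $\lambda^{\ast}_{f}=\lambda^{\ast}_{L}$ for both $\ast=wu$ and $\ast=su$.

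With the Lyapunov spectra of $f$ and $L$ now matched at every pair of corresponding periodic orbits, the rigidity theorems cited above apply and deliver the desired $C^{1+\eps}$ conjugacy. The main obstacle in the whole argument is the splitting of the two unstable exponents: one must verify both that the Franks conjugacy $h$ genuinely intertwines $\W^{wu}_{f}$ with $\W^{wu}_{L}$ (more delicate than the strong case because the weak unstable sits inside the two-dimensional $\W^{u}_{f}$), and that topological entropy along a one-dimensional expanding foliation is preserved by such a leaf-to-leaf conjugacy. The weak unstable being dynamically characterised as the locus of orbits that separate at the slower exponential rate makes its preservation by $h$ natural, and the entropy invariance then follows from the definition via $(n,\eps)$-separated sets within leaves combined with the uniform continuity of $h$.
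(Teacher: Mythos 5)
Your proposal is correct in outline but follows a genuinely different route from the paper's proof of Theorem \ref{teo3} --- in fact, you have essentially transplanted the strategy the paper reserves for the higher--dimensional Theorem \ref{teo4}. The paper's own argument for Theorem \ref{teo3} never matches the individual exponents of $f$ with those of $L$ directly: Lemma \ref{constant} uses the closing lemma, Ruelle's inequality and the Pesin/Ledrappier--Young formula for the SRB measures of $f$ and $f^{-1}$ to show $\Lambda^s+\Lambda^u=0$, hence (by Bowen) that $f$ is conservative with the absolutely continuous measure as measure of maximal entropy, and then quotes the conservative rigidity result of \cite{MT2} (Corollary 3.3) together with \cite{GoGu} to get the $C^{1+\varepsilon}$ conjugacy. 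That route deliberately sidesteps the two points your argument leans on: (i) that the Franks conjugacy intertwines the weak and strong unstable foliations of $f$ and $L$, and (ii) that topological entropy along an expanding foliation is invariant under a leaf--to--leaf topological conjugacy. Both are true but neither is free: (i) is a quasi--isometry/global product structure theorem (Brin--Burago--Ivanov, Hammerlindl, and the corresponding step in \cite{GoGu}), not a consequence of the heuristic ``the weak leaf is the locus of slow separation,'' which is delicate precisely because $h$ is only H\"older; and (ii) requires knowing that the foliation entropy can be computed with ambient rather than leaf metrics before uniform continuity of $h$ can be invoked. One genuine simplification is available to you: since plain topological entropy already gives $\lambda^{wu}_f+\lambda^{su}_f=\lambda^{wu}_L+\lambda^{su}_L$, you only need the foliation--entropy argument for the \emph{strong} unstable foliation $\mathcal{W}^{su}$ (whose preservation by $h$ is the standard, easier case); the weak unstable equality then follows by subtraction, so the step you yourself flag as the main obstacle can be avoided entirely. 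What your approach buys, once these citations are supplied, is a self--contained proof that does not pass through conservativity or through \cite{MT2}; what the paper's approach buys is that it needs no information about how $h$ acts on the intermediate foliations.
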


The results above are general, it is sufficient the constant periodic data condition to ensure rigidity. For dimension $d \geq 4,$ we are able to prove a more restricted version of the previous Theorems in several cases. More precisely, we have.

\begin{theorem}\label{teo4} Let $L: \mathbb{T}^d \rightarrow  \mathbb{T}^d , d \geq 4,$ be a linear Anosov automorphism, diaganalizable over $\mathbb{R},$ irreducible over $\mathbb{Q},$ with distinct eigenvalues.  Suppose that $E^s_L = E^s_1 \oplus E^s_2 \oplus \ldots \oplus E^s_k $ and  $E^u_L = E^u_1 \oplus E^u_2 \oplus \ldots \oplus E^u_n .$ If $f$ is a $C^2$ diffeomorphism of $\mathbb{T}^d$ sufficiently $C^1-$close to $L,$ such that $\lambda^u_i(p, f) = \lambda^u_i(q, f), $ for any $p,q \in Per(f),$ $i =1, \ldots, n$ and $\lambda^s_i(p, f) = \lambda^s_i(q, f), $ for any $p,q \in Per(f),$ $i =1, \ldots, k,$ then $f$ is $C^{1+\varepsilon}$ conjugated with its linearization $L,$ for some $\varepsilon > 0.$
  \end{theorem}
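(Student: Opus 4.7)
The plan is to reduce Theorem~\ref{teo4} to a known \emph{matching periodic data with linearization} rigidity result by showing that the constant values $\lambda^u_i(f)$ and $\lambda^s_j(f)$ coincide with the corresponding logarithms $\log|\lambda^u_i(L)|$ and $\log|\lambda^s_j(L)|$ of the eigenvalues of $L$. The bridge between the weaker \emph{constant periodic data} hypothesis and the stronger \emph{same periodic data as $L$} hypothesis is topological entropy along intermediate invariant subfoliations.

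First I would set up the persistent dominated structure. Since $L$ has $d$ distinct real eigenvalues, the splitting of $T\mathbb{T}^d$ into eigenlines is dominated, and hence for $f$ sufficiently $C^1$-close to $L$ this splitting persists as a dominated decomposition $E^u_f = E^u_{1,f}\oplus\cdots\oplus E^u_{n,f}$ and $E^s_f = E^s_{1,f}\oplus\cdots\oplus E^s_{k,f}$ into one-dimensional $Df$-invariant subbundles, ordered by expansion/contraction rate. Each line field is uniquely integrable, and by the normally hyperbolic persistence theorem of Hirsch--Pugh--Shub applied to the flat weak-unstable subfoliations of $L$, the sums $E^u_{1,f}\oplus\cdots\oplus E^u_{i,f}$ integrate to $f$-invariant foliations $W^{u,i}_f$, $C^1$-close to the corresponding sub-tori of $L$; symmetrically for $W^{s,j}_f$.

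Next I would identify the exponents via entropy along these foliations. Using constant periodic data together with the Anosov closing / specification property, any $f$-invariant ergodic measure has Lyapunov exponent along $E^u_{i,f}$ equal to the common value $\lambda^u_i(f)$. Consequently, a Ledrappier--Young-type computation (which collapses because the exponents are measure-independent) shows that the topological entropy of $f$ along the foliation $W^{u,i}_f$ equals $\sum_{j\le i}\lambda^u_j(f)$. By Franks, $f$ and $L$ are conjugate through a H\"older homeomorphism $h$; the weak-unstable subfoliations are topologically characterized by the expansion rates of dynamical balls (they are the invariant foliations collecting the $i$ slowest unstable directions), so $h$ sends $W^{u,i}_f$-leaves to $W^{u,i}_L$-leaves. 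Topological entropy along an invariant foliation being a conjugacy invariant, and the entropy of $L$ along $W^{u,i}_L$ being simply $\sum_{j\le i}\log|\lambda^u_j(L)|$, one obtains
\[
\sum_{j=1}^{i}\lambda^u_j(f) \;=\; h_{\mathrm{top}}\!\bigl(f,W^{u,i}_f\bigr) \;=\; h_{\mathrm{top}}\!\bigl(L,W^{u,i}_L\bigr) \;=\; \sum_{j=1}^{i}\log|\lambda^u_j(L)|
\]
for every $i=1,\dots,n$. Inducting on $i$ yields $\lambda^u_i(f)=\log|\lambda^u_i(L)|$, and the symmetric argument (applied to $f^{-1}$) gives $\lambda^s_i(f)=\log|\lambda^s_i(L)|$. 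Thus $f$ and $L$ share their periodic data at all corresponding periodic orbits.

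Finally, with $f$ a $C^2$ diffeomorphism $C^1$-close to an irreducible, $\mathbb{R}$-diagonalizable $L$ with distinct eigenvalues, and now with $f$ and $L$ having the same periodic data, the local rigidity result of Gogolev--Kalinin--Sadovskaya (same periodic data implies $C^{1+\varepsilon}$ conjugacy in this setting) applies directly and produces the desired smooth conjugacy.

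The main obstacle is the middle step: upgrading \emph{constant periodic data along $E^u_{i,f}$} to \emph{$\lambda^u_i(f)=\log|\lambda^u_i(L)|$}. This splits into two technical points that must be handled carefully: (i) verifying that the Franks H\"older conjugacy $h$ actually sends the weak unstable subfoliations of $f$ to those of $L$ (a topological characterization of these slow subfoliations in the perturbative regime), and (ii) identifying the foliation-entropy of $f$ along $W^{u,i}_f$ with the partial sum $\sum_{j\le i}\lambda^u_j(f)$, which rests on the fact that constant periodic data forces the Lyapunov exponents of $f$ to be the same for every invariant measure and hence reduces the Ledrappier--Young formula to a clean partial-Pesin identity.
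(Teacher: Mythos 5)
Your proposal follows essentially the same route as the paper: both pass from constant periodic data to equality with the exponents of $L$ by computing the topological entropy of $f$ along the nested weak-unstable subfoliations $W^{u,f}_{(1,i)}$ (via the Hu--Hua--Wu unstable-entropy machinery, after showing that constant periodic data forces all invariant measures to share the same exponents), matching it with the corresponding entropy of $L$ through the conjugacy (the paper cites Fisher--Potrie--Sambarino for $h(W^{u,L}_{(1,i)})=W^{u,f}_{(1,i)}$, which is your technical point (i)), telescoping to get $\lambda^u_i(f)=\log|\beta^u_i|$, and then invoking the known same-periodic-data local rigidity results (the paper cites Gogolev and Saghin--Yang rather than Gogolev--Kalinin--Sadovskaya, and it computes the foliation entropy via volume growth rather than your Ledrappier--Young variational argument, but these are interchangeable ingredients of the same strategy). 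The proposal is correct in outline and matches the paper's argument.
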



\section{One dimensional case - Expanding Maps}

In this section we present some important classical results about expanding endomorphism of the circle, which will be useful for our propose.

\begin{lemma}[Bounded Distortion Lemma] Let $f$ be a $C^{1+ \alpha}$ expanding endomorphism of $S^1.$ There is a constant $C_f \geq 1,$ such that if $I \subset S^1$ is an interval and $f^n$ is injective on $I,$ then
$$\frac{1}{C_f} < \frac{|Df^n(x)|}{|Df^n(y)|} < C_f,$$

for any $x, y \in I.$
\end{lemma}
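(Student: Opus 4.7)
The plan is to use the chain rule to break up $\log|Df^n|$ into a sum along the orbit, bound each term via H\"older regularity of $\log|Df|$, and then control the resulting sum by a geometric series that comes from backward contraction of $f^n$ along orbits in $I$.

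First I would write, by the chain rule,
$$\log|Df^n(x)| - \log|Df^n(y)| = \sum_{i=0}^{n-1}\Bigl(\log|Df(f^i(x))| - \log|Df(f^i(y))|\Bigr).$$
Since $f$ is expanding, $|Df|$ is bounded below by some constant $\lambda > 1$, so $\log|Df|$ is well-defined and, because $f\in C^{1+\alpha}$, is $\alpha$-H\"older on $S^1$ with some constant $H>0$. Hence each summand is at most $H\cdot d(f^i(x),f^i(y))^{\alpha}$.

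Next, the expansion hypothesis combined with injectivity of $f^n$ on $I$ lets me pull lengths backward: for each $0\le i\le n-1$, the map $f^{n-i}$ is injective on $f^i(I)$ and $|Df^{n-i}|\ge \lambda^{n-i}$, so
$$\operatorname{diam}(f^i(I))\;\le\;\lambda^{-(n-i)}\operatorname{diam}(f^n(I))\;\le\;\lambda^{-(n-i)},$$
using $\operatorname{diam}(f^n(I))\le 1$ in $S^1$. Thus $d(f^i(x),f^i(y))\le \lambda^{-(n-i)}$ for all $x,y\in I$.

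Summing the H\"older estimates and using the geometric series,
$$\bigl|\log|Df^n(x)| - \log|Df^n(y)|\bigr|\;\le\; H\sum_{i=0}^{n-1}\lambda^{-\alpha(n-i)}\;\le\;\frac{H}{1-\lambda^{-\alpha}},$$
independently of $n$, $x$, $y$, and $I$. Exponentiating yields the desired constant $C_f=\exp\!\bigl(H/(1-\lambda^{-\alpha})\bigr)$.

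There is no genuine obstacle here; the only point requiring care is the backward-length estimate, which relies crucially on the injectivity of $f^n$ on $I$ (so that no folding inflates the diameters of the intermediate images) and on uniform expansion to guarantee the geometric decay that makes the H\"older sum summable.
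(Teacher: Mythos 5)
Your proof is correct, and it follows the same overall strategy as the paper's: telescope $\log|Df^n|$ along the orbit, bound each term using the regularity of the derivative, and control the sum by the geometric decay coming from backward contraction under the injectivity hypothesis. The one genuine difference is how the per-term bound is obtained: the paper applies the mean value theorem to $Df$, writing $Df(x_i)-Df(y_i)=(x_i-y_i)D^2f(z_i)$, which requires $f$ to be $C^2$ (or at least $Df$ Lipschitz), whereas you use the $\alpha$-H\"older continuity of $\log|Df|$ (valid since $|Df|\geq\lambda>1$ is bounded away from zero). Your version is the one that actually matches the lemma as stated for $C^{1+\alpha}$ maps; the paper's written argument silently upgrades the hypothesis to $C^2$, and the geometric series exponent must be taken as $\alpha$ rather than $1$ in the H\"older case, exactly as you do. So your argument is not merely equivalent but is the correct adaptation to the stated regularity class.
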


\begin{proof}
Since $f$ is expanding map, there is $\lambda > 1, $ such that $|Df(z)| > \lambda, $ for every $z \in S^1.$

Consider $x, y \in I$ and denote $x_i = f^i(x), y_i = f^i(y).$ One has

$$\frac{Df^n(x)}{Df^n(y)} = \frac{\prod_{i = 0}^{n-1} Df (f^i (x))}{\prod_{i = 0}^{n-1} Df (f^i (y))} =
\prod_{i = 0}^{n-1} \left( 1 + \frac{Df(x_i) - Df(y_i)}{Df(y_i)}\right).$$

Using mean value theorem and since $f$ is $C^2,$ we obtain

$$\frac{Df^n(x)}{Df^n(y)} = \prod_{i = 0}^{n-1} \left( 1 + \frac{(x_i - y_i)D^2f(z_i)} {Df(y_i)}\right) \leq \prod_{i = 0}^{n-1} (1 + M (\lambda^{-1})^{n-i}|x_n - y_n| ), $$
once $f^{n-i} (x_i) = x_n, f^{n-i} (y_i) = y_n. $ Passing modulus, taking  $\log,$ and using the elementary fact $\log(1 + x) < x,$ for any $x > 0,$ we have

$$ \log\left(\frac{|Df^n(x)|}{|Df^n(y)|}\right ) \leq M |x_n - y_n| \sum_{i = 0}^{\infty} \lambda^{-i} \leq  M \sum_{i = 0}^{\infty} \lambda^{-i} \leq K = K_f, $$
finally

$$\frac{|Df^n(x)|}{|Df^n(y)|} < C_f = \exp(K_f), \frac{|Df^n(y)|}{|Df^n(x)|} < C_f = \exp(K_f). $$

\end{proof}

\begin{theorem}\label{unique}
Any $C^2$ expanding map $f: S^1 \rightarrow S^1$ has a unique absolutely continuous invariant measure, $\mu_f.$ Furthermore $\mu_f$ is ergodic.
\end{theorem}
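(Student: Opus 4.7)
The plan is to construct the absolutely continuous invariant measure via the transfer (Perron--Frobenius) operator and to deduce ergodicity, hence uniqueness, from the Bounded Distortion Lemma just established. Define, on $L^{1}(m)$ with $m$ Lebesgue measure on $S^{1}$,
\[
\cL_f \phi(x) \;=\; \sum_{y \in f^{-1}(x)} \frac{\phi(y)}{|Df(y)|},
\]
so that $\int (\phi \circ f)\,\psi\,dm = \int \phi\,\cL_f \psi\,dm$ and the fixed points of $\cL_f$ are precisely the densities of $f$-invariant absolutely continuous probabilities. Using the Bounded Distortion Lemma applied to the $d^n$ inverse branches of $f^n$ (each of which is defined on $S^1$, since $f$ is a covering), I would show that the iterates $\cL_f^n \mathbf{1}$ are uniformly bounded above and below by positive constants, independent of $n$ and of $x \in S^1$. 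Taking Ces\`aro averages $\tfrac{1}{N}\sum_{n=0}^{N-1} \cL_f^n \mathbf{1}$ and extracting a weak-$*$ limit in $L^\infty$ (Banach--Alaoglu) yields a fixed density $\rho_f$ satisfying $C_f^{-1} \le \rho_f \le C_f$; setting $d\mu_f = \rho_f\,dm$ gives the desired a.c.\ invariant probability, equivalent to $m$.

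For ergodicity I would run the classical Lebesgue-density-point argument, powered by bounded distortion. Let $A \subset S^1$ be an $f$-invariant set with $0 < m(A) < 1$ and pick density points $x_0 \in A$ and $y_0 \in A^c$. Given $\eps > 0$, choose a scale $\delta = \delta(\eps)$ so small that the distortion estimate from the Lemma, applied to any interval $I$ with $|f^n(I)| = \delta$, gives a distortion constant less than $1+\eps$; this is possible because in the proof of the Lemma the exponent is proportional to $|x_n - y_n| \le \delta$. For each large $n$, I would construct an interval $I_n \ni x_0$ on which $f^n$ is a homeomorphism with $|f^n(I_n)| = \delta$ and $|I_n| \to 0$. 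Bounded distortion and invariance of $A$ then give
\[
\frac{m(A \cap f^n(I_n))}{|f^n(I_n)|} \;\ge\; \frac{1}{1+\eps}\cdot\frac{m(A \cap I_n)}{|I_n|} \;\xrightarrow[n \to \infty]{}\; \frac{1}{1+\eps},
\]
so on some interval of length $\delta$ the set $A$ has density at least $1-2\eps$; an identical argument starting from $y_0$ does the same for $A^c$. By topological mixing of expanding circle maps, an appropriately chosen iterate of an inverse branch maps a distortion-controlled subinterval of the $A$-interval onto the $A^c$-interval; a last application of bounded distortion forces the density of $A$ on the $A^c$-interval to be at least $(1+\eps)^{-1}(1-2\eps) \ge 1-3\eps$, contradicting the density $\ge 1-2\eps$ of $A^c$ there once $\eps$ is small. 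Hence $m(A) \in \{0,1\}$ and $\mu_f$ is ergodic. Uniqueness then follows at once: any other absolutely continuous invariant probability $\mu$ has Radon--Nikodym derivative $d\mu/d\mu_f$ which is $f$-invariant $\mu_f$-almost everywhere, hence constant by ergodicity, forcing $\mu = \mu_f$.

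The main obstacle I anticipate is making the ergodicity step completely rigorous: calibrating the scale $\delta$ so that the distortion constant is arbitrarily close to $1$, and combining the two density-point estimates for $A$ and $A^c$ via topological mixing while controlling the distortion along the intermediate inverse branches. The existence part via the transfer operator is essentially formal once the uniform bounds on $\cL_f^n \mathbf{1}$ are in hand, and the Radon--Nikodym passage from ergodicity to uniqueness is standard.
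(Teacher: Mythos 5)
The paper offers no proof of this theorem; it only cites Katok--Hasselblatt, and your strategy (transfer operator plus bounded distortion for existence, Lebesgue density points for ergodicity, Radon--Nikodym for uniqueness) is precisely the standard argument of that reference, so the overall route is the right one. The existence half is sound: $\cL_f^n\mathbf{1}(x)=\sum_{y\in f^{-n}(x)}|Df^n(y)|^{-1}$, each inverse branch of $f^n$ is full and its domain $I_{n,j}$ has $|I_{n,j}|$ comparable to $|Df^n(y_j)|^{-1}$ up to $C_f$ by bounded distortion, so summing over the $d^n$ branches gives $C_f^{-1}\le \cL_f^n\mathbf{1}\le C_f$; the Ces\`aro/weak-$*$ extraction then produces a fixed density equivalent to $m$ (you should record the small verification that $\cL_f$ passes to the weak-$*$ limit, which follows since its preadjoint $\phi\mapsto\phi\circ f$ is bounded on $L^1$ and $\frac1N(\cL_f^N\mathbf{1}-\mathbf{1})\to 0$ uniformly).

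The genuine gap is in the last step of your ergodicity argument. You find an interval $J_1$ of length $\delta$ on which $A$ has density $\ge 1-2\eps$ and an interval $J_2$ on which $A^c$ has density $\ge 1-2\eps$, then pull $J_2$ back by an inverse branch to a subinterval $J_1'\subset J_1$ and apply bounded distortion on $J_1'$. But distortion only converts the density of $A$ \emph{on $J_1'$} into an estimate on $J_2=f^n(J_1')$, and you control the density of $A$ only on all of $J_1$: the branch domain $J_1'$ has length roughly $|J_2|/|Df^n|$, an arbitrarily small fraction of $|J_1|$, so it can lie entirely inside the exceptional set $A^c\cap J_1$ of measure up to $2\eps\delta$. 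As written the final contradiction does not follow. The repair is to drop the mixing step and the calibration of $\delta$ altogether and use that every branch of $f^n$ is full: let $x_0$ be a Lebesgue density point of $A$ and let $I_n$ be the maximal injectivity interval of $f^n$ containing $x_0$, so $f^n(I_n)=S^1$ and $|I_n|\to 0$. Invariance gives $f^n(A^c\cap I_n)=A^c\cap f^n(I_n)=A^c$, and since $m(A^c)\le \sup_{I_n}|Df^n|\cdot m(A^c\cap I_n)$ while $1=m(f^n(I_n))\ge \inf_{I_n}|Df^n|\cdot|I_n|$, the Bounded Distortion Lemma yields $m(A^c)\le C_f\, m(A^c\cap I_n)/|I_n|\to 0$. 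This proves ergodicity of $m$, hence of the equivalent measure $\mu_f$, and your Radon--Nikodym deduction of uniqueness (legitimate because $\rho_f$ is bounded below, so any a.c.\ invariant $\mu$ satisfies $\mu\ll\mu_f$) then goes through.
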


For a proof, see \cite{K}, page 193.

\begin{theorem}[Sackesteder, Krzyzewsky ] \label{saks}
Let  $f: M \rightarrow M$ be a $C^r, r \geq 2$ expanding map. Then there is a normalized $C^{r-1}$ invariant measure for $f.$
\end{theorem}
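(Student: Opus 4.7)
The plan is to construct the invariant density as a fixed point of the Perron--Frobenius (transfer) operator
$$(\mathcal{L}_f \varphi)(y) = \sum_{x \in f^{-1}(y)} \frac{\varphi(x)}{|\jac f(x)|}$$
acting on nonnegative densities with respect to a smooth reference measure $m$ on $M$. A standard change of variables shows that $d\mu = \varphi\, dm$ is $f$-invariant exactly when $\mathcal{L}_f \varphi = \varphi$, and that $\mathcal{L}_f$ preserves both nonnegativity and the integral against $m$. So the task reduces to exhibiting a $C^{r-1}$ fixed point of $\mathcal{L}_f$.

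First I would show that the sequence $\varphi_n = \mathcal{L}_f^n \mathbf{1}$ is uniformly bounded above and below away from zero. Writing
$$\varphi_n(y) = \sum_{x \in f^{-n}(y)} \frac{1}{|\jac f^n(x)|},$$
the Bounded Distortion Lemma (and its evident higher-dimensional analogue, which uses only the $C^{1+\alpha}$ regularity of $f$ and the uniform expansion) implies that all terms $|\jac f^n(x)|^{-1}$ for $x \in f^{-n}(y)$ are mutually comparable up to a constant $C_f$ independent of $n$. Combined with $\int \varphi_n \, dm = 1$, this forces $C_f^{-1} \leq \varphi_n(y) \leq C_f$ uniformly in $n$ and $y$.

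Next, to get $C^{r-1}$ regularity, I would express $\varphi_n$ in terms of the inverse branches $g_{n,i} : U_{n,i} \to M$ of $f^n$, which satisfy $\|Dg_{n,i}\| \leq \lambda^{-n}$ for the expansion rate $\lambda > 1$. One has $\varphi_n(y) = \sum_i |\jac g_{n,i}(y)|$. Because $f \in C^r$, each $|\jac g_{n,i}|$ is $C^{r-1}$, and differentiating $k$ times with $1 \leq k \leq r-1$ using the chain rule produces terms that are controlled by the $C^r$ norm of $f$ multiplied by factors of $\|Dg_{n,i}\|$, hence by powers of $\lambda^{-n}$. A careful Ruelle-type bookkeeping, inductive in the derivative order, shows that $\|\varphi_n\|_{C^{r-1}}$ stays bounded uniformly in $n$.

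With uniform $C^{r-1}$ bounds on the Cesàro averages $\psi_N = \frac{1}{N}\sum_{n=0}^{N-1} \varphi_n$, Arzelà--Ascoli extracts a subsequence $\psi_{N_j}$ converging in $C^{r-2}$ to some $\varphi_\infty$ that still lies in $C^{r-1}$ by lower semicontinuity of the $C^{r-1}$ norm under uniform limits with bounded higher derivatives. The telescoping identity
$$\mathcal{L}_f \psi_N - \psi_N = \frac{\varphi_N - \mathbf{1}}{N}$$
and the uniform bound on $\|\varphi_N\|_{C^0}$ give $\mathcal{L}_f \varphi_\infty = \varphi_\infty$. Since $\int \psi_N \, dm = 1$ for all $N$, the limit $\varphi_\infty$ is a normalized $C^{r-1}$ invariant density. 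The main obstacle is the uniform $C^{r-1}$ estimate on $\varphi_n$: one must organize the chain-rule expansion of the higher derivatives of the inverse-branch Jacobians so that each additional derivative is absorbed by a further factor of $\lambda^{-n}$, and this is where the $C^r$ hypothesis on $f$ is crucially used.
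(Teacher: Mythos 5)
The paper itself offers no proof of this statement: it is quoted as a classical theorem of Sacksteder and Krzy\.zewski, with no argument or even an explicit reference attached. So there is nothing in the paper to compare your proof against; your transfer-operator construction is the standard route to this result, and the overall architecture (fixed point of $\mathcal{L}_f$, uniform upper and lower bounds from distortion, uniform $C^{r-1}$ bounds from the geometric contraction of inverse branches, Ces\`aro averaging plus Arzel\`a--Ascoli) is the right one.

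Two steps, however, are not correctly justified as written. The more serious one is the distortion step. It is \emph{not} true that the terms $|\jac f^n(x)|^{-1}$ for $x \in f^{-n}(y)$, with $y$ fixed, are mutually comparable with a constant independent of $n$: distinct inverse branches can have Jacobians differing by a factor that grows exponentially in $n$ (compare the branch shadowing a fixed point of multiplier $2$ with the branch shadowing a fixed point of multiplier $4$; the corresponding summands are $\asymp 2^{-n}$ and $\asymp 4^{-n}$). What bounded distortion actually gives is comparability of the \emph{same} branch at two different base points: pairing $x_i = g_{n,i}(y)$ with $x_i' = g_{n,i}(y')$ yields $\varphi_n(y) \leq C_f\,\varphi_n(y')$ for all $y, y'$, and this oscillation bound together with $\int \varphi_n\,dm = 1$ gives $\big(C_f\, m(M)\big)^{-1} \leq \varphi_n \leq C_f/m(M)$. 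The conclusion you want is true, but your stated premise is false, and even granting it, comparability of the summands at a single $y$ plus normalization of the integral would not bound $\varphi_n(y)$. The second soft spot is the passage to the limit: uniform $C^{r-1}$ bounds on $\psi_N$ plus convergence in $C^{r-2}$ only guarantee that the limit has Lipschitz $(r-2)$-nd derivatives; to land in $C^{r-1}$ you need equicontinuity of the $(r-1)$-st derivatives, which does hold here (it comes from the uniform modulus of continuity of $D^r f$ combined with the factors $\lambda^{-n}$), but must be invoked. Finally, the uniform $C^{r-1}$ estimate itself --- the ``Ruelle-type bookkeeping'' --- is the technical heart of the theorem and is only asserted; the mechanism you describe (each additional derivative of an inverse branch is absorbed by a further factor of $\lambda^{-n+j}$, so the chain-rule sums converge geometrically) is the correct one, but a complete proof would have to carry out the induction on the derivative order.
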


\begin{theorem} [Shub-Sullivan's Theorem, \cite{SS}] \label{ss} Let $f,g$ be two $C^r, r \geq 2$ orientation preserving endomorphism of the circle. Then $f$ and $g$ are absolutely continuous conjugated if and only if $f$ and $g$ are $C^r$ conjugated.
\end{theorem}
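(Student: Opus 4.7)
The implication ``$C^r$ implies absolutely continuous'' is immediate, so the plan concerns only the converse: upgrading an absolutely continuous conjugacy to $C^r$-regularity. The idea is to realize the conjugacy $h$ explicitly as a composition of primitives of the two invariant densities, and then to read off $h\in C^r$ from the $C^{r-1}$ regularity of those densities furnished by Theorem~\ref{saks}.

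First, Theorem~\ref{unique} gives, for each of $f$ and $g$, a unique absolutely continuous invariant probability measure, and Theorem~\ref{saks} lets me take its density strictly positive and of class $C^{r-1}$. Write $\mu_f=\rho_f\, dm$ and $\mu_g=\rho_g\, dm$. Second, let $h$ denote the absolutely continuous conjugacy, say $g\circ h=h\circ f$ (the opposite convention is symmetric). Then $h_{*}\mu_f$ is $g$-invariant by the intertwining relation and absolutely continuous because $h$ is; the uniqueness part of Theorem~\ref{unique} then forces $h_{*}\mu_f=\mu_g$.

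Third, I pass to a lift $\tilde h\colon\mathbb{R}\to\mathbb{R}$, fix a base point $x_0$, and introduce the primitives $F_f(x)=\int_{x_0}^{x}\rho_f(t)\,dt$ and $F_g(y)=\int_{\tilde h(x_0)}^{y}\rho_g(t)\,dt$. Each is an orientation-preserving $C^r$ diffeomorphism onto its image, with strictly positive derivative. Applying the push-forward identity $h_{*}\mu_f=\mu_g$ to intervals of the form $[\tilde h(x_0),\tilde h(x)]$ yields
\[
F_g(\tilde h(x))=F_f(x),\qquad\text{so that}\qquad \tilde h=F_g^{-1}\circ F_f.
\]
Since composition and inversion of $C^r$ maps with non-vanishing derivative remain $C^r$, $\tilde h$ is $C^r$, and hence so is $h$ on $S^1$.

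The one step that deserves genuine care is the push-forward identity $h_{*}\mu_f=\mu_g$: one must verify that an absolutely continuous homeomorphism of $S^1$ (which, in the usual convention, carries absolute continuity of the inverse as well) sends Lebesgue-null sets to Lebesgue-null sets in both directions, so that $h_{*}\mu_f\ll m$. Once this is secured, uniqueness in Theorem~\ref{unique} closes the argument and the remainder is routine manipulation of primitives on $\mathbb{R}$.
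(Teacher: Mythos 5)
Your proposal is correct, and it shares the paper's first half verbatim: both arguments start by pushing $\mu_f$ forward under the absolutely continuous conjugacy, invoking the uniqueness in Theorem~\ref{unique} to get $h_{*}\mu_f=\mu_g$, and both ultimately rest on the $C^{r-1}$ regularity and positivity of the densities from Theorem~\ref{saks}. Where you diverge is in how the identity $h_{*}\mu_f=\mu_g$ is converted into smoothness of $h$. The paper differentiates: it uses the change-of-variables relation $\omega_f(x)=\omega_g(h(x))\,h'(x)$ a.e.\ to exhibit $h$ as an a.e.\ solution of the separable ODE $z'=\omega_f(t)/\omega_g(z)$, produces the $C^r$ solution $H$ with $H(0)=h(0)$ by Picard, and then identifies $h$ with $H$ via the integral equation $h(t)=h(0)+\int_0^t \omega_f(s)/\omega_g(h(s))\,ds$ and a continuation argument. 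You integrate instead: evaluating $h_{*}\mu_f=\mu_g$ on intervals gives $F_g(\tilde h(x))=F_f(x)$ directly, i.e.\ $\tilde h=F_g^{-1}\circ F_f$ — which is of course exactly the closed-form solution of the paper's separable ODE. Your route buys two things: it bypasses ODE existence/uniqueness and the continuation step entirely, and, once the push-forward identity is secured, it requires no further regularity of $h$ (no a.e.\ derivative, no bounded-variation or fundamental-theorem-of-calculus step for $h$, which the paper does lean on when writing $h(t)=h(0)+\int_0^t h'$). The paper's ODE formulation, on the other hand, is the one that generalizes beyond dimension one, where no distribution function is available — which is presumably why the author frames it that way, citing \cite{LLAVE} and \cite{MT2}. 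One small point to keep explicit in your write-up: the identity $F_g(\tilde h(x))=F_f(x)$ as stated uses that $\tilde h$ is increasing, which follows from $f$ and $g$ both preserving orientation (the paper makes this step explicit when concluding $h'>0$); in the orientation-reversing case one gets $F_g\circ\tilde h=\mathrm{const}-F_f$, with the same conclusion.
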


\begin{remark}The proof that we present here is different from the original one. Here we use an argument involving ordinary differential equations. A similar argument the reader can see in \cite{LLAVE} and \cite{MT2}, for example.
\end{remark}
\begin{proof}
Let $\mu_f$ and $\mu_g$ be the unique invariant measures absolutely continuous with respect to Lebesgue measure on the circle, for $f$ and $g$ respectively. Let $\omega_f$ and $\omega_g$ be the positive densities of $\mu_f$ and $\mu_g$ respectively. Since $h$ is absolutely continuous, the measure $h_{\ast}(\mu_f)$ is a probability invariant measure for $g,$ moreover $h_{\ast}(\mu_f)$ is absolutely continuous. By the uniqueness established in the theorem \ref{unique}, we have $h_{\ast}(\mu_f) = \mu_g.$

Since $h$ is an absolutely continuous homeomorphism, there is $h'$ for almost every point $x \in S^1.$ So using the chang of variable formula, we have:

$$   \int_I \omega_f(x)  dx =  \int_{h(I)} d(h_{\ast}(\mu_f)) = \int_{h(I)} \omega_g dx = \int_I \omega_g(h(x))|h'(x)|dx, $$
for any interval $I \subset S^1.$

So we have $\omega_f(x) = \omega_g(h(x))|h'(x)|,$ for almost everywhere $x \in S^1.$

Since $f,g$ has the same orientation, then $h$ is not reverse wise, then $h' > 0.$ So for almost everywhere $t \in S^1,$ the function $h$ satisfies the ordinary differential equation

\begin{equation}\label{edo}
z' = \frac{\omega_f(t)}{\omega_g(z)}.
\end{equation}

By Theorem $\ref{saks},$ the densities $\omega_f$ and $\omega_g$ are $C^{r-1}$ functions. So considering  equation  $(\ref{edo})$ for every $t \in S^1,$ by classical theory of ordinary differential equations, the O.D.E above has a unique $C^r$ solution $H$ satisfying $H(0) = h(0).$

So, since $h$ has bounded variation, for every $t \in S^1 $ we have

$$h(t) = h(0) + \int_{0}^t h'(s) ds = h(0) + \int_{0}^t \frac{\omega_f(s)}{\omega_g(h(s))}ds,   $$
on the other hand, changing $h$ by $H$ in the equation above, the analogous equations holds for $H.$ Moreover, by Picard's Theorem $H$ is the unique continuous function satisfying the integral equation above with the initial condition $H(0) = h(0).$  Then, by a continuation argument,  $H(t) = h(t),$ for every $t.$

\end{proof}


\subsection{Proof of Theorem \ref{teo1}}Let us to prove Theorem \ref{teo1}.
\begin{proof}  Since the degree of $f$ is  $d \geq 2$ we know that $f$ is conjugated to $E_d(x) = dx ( mod1).$  Fix the notation:  $\log(\lambda), \lambda > 1,$ denoting the Lyapunov exponent on periodic points.  In particular if $p$ has period $n,$ then

$$|Df^n(p)| = \lambda^n. $$
Fix $n > 1$ and $\{{I_{n,j}}\}_{j = 1}^{d^n}$ maximal intervals of injectivity of $f^n,$ such that $f^n(I_{n,j}) = S^1, j=1, 2\ldots, d^n. $
By conjugacy with $E_{d^n},$ in each $\overline{I}_{n,j},$ there is a periodic point $p_{n,j}, f^{n}(p_{n,j}) =  p_{n,j}.$

Denote by $|I|$  the size of the oriented interval $I \subset S^1.$  By bounded distortion lemma we have that there is $C > 1,$ such that

\begin{equation}\label{lips1}
 \frac{1}{C} |Df^{n}(p_{n,j})| \cdot |I_{n,j}|  \leq |S^1| = 1 \leq C |Df^{n}(p_{n,j})| \cdot |I_{n,j}|,
\end{equation}
rewriting we have:

$$ \frac{1}{C} \lambda^n \cdot |I_{n,j}|  \leq 1 \leq C \lambda^n \cdot |I_{n,j}|. $$
In particular, using the inequality above,  we have

$$\frac{1}{C \lambda^n} \leq |I_{n,j}| \leq \frac{C}{\lambda^n},$$
also we have

$$1 \geq d^n \cdot \displaystyle\min_{1 \leq j \leq d^n}\{|I_{n,j}|\} \geq d^n \cdot \frac{1}{C \lambda^n} \Rightarrow \frac{d^n}{\lambda^n} \leq C $$

$$1 \leq d^n \cdot \displaystyle \max_{1 \leq j \leq d^n}\{|I_{n,j}|\} \leq d^n \cdot \frac{C}{\lambda^n} \Rightarrow \frac{d^n}{\lambda^n} \geq \frac{1}{C}, $$
we obtain

\begin{equation}\label{lips2}
\frac{1}{C} \leq \frac{d^n}{\lambda^n} \leq C,
\end{equation}
for every $n \geq 1.$ Since the conjugacy $h$ between $f$ and $E_d$ carries intervals $I_{n,j}$ to intervals $X_{n,j}$ the maximal intervals of injectivity of $E_{d^n},$ which size is $\frac{1}{d^n}.$ So from  equations $(\ref{lips1})$ and $(\ref{lips2})$ have

\begin{equation}\label{lips3}
\displaystyle\frac{1}{C^2} \leq \displaystyle\frac{|h(I_{n,j})|}{|I_{n,j}|} \leq C^2,
\end{equation}
for every $n \geq 1$ and $1 \leq j \leq d^n.$

Since $n$ is arbitrary, by (\ref{lips2}) we have $d^n = \lambda^n = |Df^n(p)|,$ for every $p \in Per(f),$ such that $f^n(p) = p,$ in particular $\lambda_f(p) =\log(d),$ for any $p \in Per(f).$  Also, by $(\ref{lips3}),$ and the fact $\displaystyle\max_{1 \leq j \leq d^n}\{|I_{n,j}|\} \rightarrow 0,$ when $n \rightarrow +\infty, $ we have that $h$  is a bi-Lipschitz conjugacy, consequently an absolutely continuous conjugacy, which it is $C^r$ by Theorem \ref{ss}.

The equality $\lambda_f(p) = \lambda_f(q)$ for any $p, q \in Per(f)$ is obvious when $f$ and $E_d$ are $C^r-$conjugated.

\end{proof}

\begin{corollary} Let $f: S^1 \rightarrow S^1$ be a $C^r, r \geq 2$ orientation preserving, with degree $d \geq 2.$ Denote by $\mathcal{R}_f$ the set of regular points of $f.$ Then $f$ is $C^r$ conjugated to $E_d,$ if and only if $\mathcal{R}_f = S^1.$
\end{corollary}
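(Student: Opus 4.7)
The plan is to reduce the corollary to Theorem~\ref{teo1} by unpacking the regularity condition $\mathcal{R}_f = S^1$. I adopt the standard reading that a point $x \in S^1$ is regular when the Birkhoff limit $\lim_{n\to\infty}\tfrac{1}{n}\log|Df^n(x)|$ exists and coincides with $\lambda_{\mu_f}$; by Birkhoff's theorem applied to the ergodic measure $\mu_f$ (Theorem~\ref{unique}), this holds at $\mu_f$-almost every point and hence at Lebesgue-almost every point.

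For the ``if'' direction, I would assume $\mathcal{R}_f = S^1$, so in particular every periodic point $p$ is regular. At a periodic point of period $n$, the sequence $\tfrac{1}{k}\log|Df^k(p)|$ is eventually periodic in $k$ and its limit is simply $\tfrac{1}{n}\log|Df^n(p)| = \lambda_f(p)$, the periodic Lyapunov exponent. Regularity therefore forces $\lambda_f(p) = \lambda_{\mu_f}$ for every periodic $p$, i.e.\ $\lambda_{\mu_f}$ is constant on $\mathrm{Per}(f)$. Theorem~\ref{teo1} then yields the $C^r$ conjugacy with $E_d$ at once.

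For the converse, I assume $h$ is a $C^r$ conjugacy satisfying $f \circ h = h \circ E_d$. Iterating the conjugacy and differentiating gives, for every $n \geq 1$ and every $x \in S^1$,
\begin{equation*}
Df^n(h(x)) \cdot h'(x) \;=\; d^n \cdot h'(E_d^n(x)).
\end{equation*}
Since $h$ is a $C^r$ diffeomorphism of the compact circle, $|h'|$ is continuous and bounded away from $0$ and $\infty$. Taking logarithms and dividing by $n$, the boundary terms $\log|h'(E_d^n(x))|$ and $\log|h'(x)|$ are uniformly bounded in $n$ and $x$, so
\begin{equation*}
\frac{1}{n}\log|Df^n(h(x))| \;\longrightarrow\; \log d \qquad \text{for every } x \in S^1.
\end{equation*}
Because $\mu_f$ is absolutely continuous, its typical points must have this Lyapunov exponent, whence $\lambda_{\mu_f} = \log d$. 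Thus every $y = h(x) \in S^1$ has Lyapunov exponent equal to $\lambda_{\mu_f}$, which is precisely $\mathcal{R}_f = S^1$.

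The only subtle point is pinning down the meaning of ``regular''; once the definition is fixed as above, the argument is a routine combination of Theorem~\ref{teo1}, the chain rule, and the compactness of $S^1$, and no serious obstacle arises.
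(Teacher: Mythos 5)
Your converse direction (smooth conjugacy implies every point is regular) is fine: the chain-rule computation $Df^n(h(x))\,h'(x) = d^n\,h'(E_d^n(x))$ with $|h'|$ bounded away from $0$ and $\infty$ does show that $\frac{1}{n}\log|Df^n(y)| \to \log d$ for every $y$, which is in fact slightly more explicit than the paper's ``of course.'' The problem is the forward direction, and it traces back to your choice of definition. You take ``$x$ is regular'' to mean that $\lim_n \frac{1}{n}\log|Df^n(x)|$ exists \emph{and equals} $\lambda_{\mu_f}$. The paper's notion (implicit, but unambiguous from its own proof and from the analogous higher-dimensional corollary) is the standard Lyapunov--Perron one: the limit merely \emph{exists}. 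Under that definition every periodic point is automatically regular, so your key step --- ``regularity therefore forces $\lambda_f(p)=\lambda_{\mu_f}$ for every periodic $p$'' --- says nothing: the hypothesis $\mathcal{R}_f = S^1$ gives no direct information at periodic points, and your reduction to Theorem~\ref{teo1} collapses. With your stronger definition the corollary becomes essentially tautological, which is a sign the definition has absorbed the content of the statement.

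The missing idea is the contrapositive argument the paper actually uses: if $\lambda(p)\neq\lambda(q)$ for two periodic points, one uses the symbolic representation of $f$ (equivalently, shadowing/specification) to manufacture a single point $x$ whose orbit spends successively longer stretches near the orbit of $p$, then near the orbit of $q$, and so on, producing subsequences $\{n_k\}$, $\{m_k\}$ with
\begin{equation*}
\lim_{k\to+\infty}\frac{1}{n_k}\log|Df^{n_k}(x)| = \lambda(p), \qquad
\lim_{k\to+\infty}\frac{1}{m_k}\log|Df^{m_k}(x)| = \lambda(q).
\end{equation*}
Such an $x$ has no Lyapunov exponent, so $\mathcal{R}_f\neq S^1$. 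That construction is the entire substance of the ``if'' direction, and your proposal does not contain it or any substitute for it.
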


\begin{proof} Of course if $f$ and $E_d$ are $C^r$ conjugated then $\mathcal{R}_f = S^1.$ On the other hand, suppose that $\mathcal{R}_f = S^1$ and the Lyapunov exponent is not constant on $Per(f).$ Let $\lambda(p) \neq \lambda(q)$ be two different Lyapunov exponents at periodic points $p$ and $q.$ Then using 'symbolic representation' it is possible to obtain $x \in S^1$ and two subsequences $\{n_k\}_k$ and $\{m_k\}$ such that

$$\lim_{k \rightarrow +\infty} \frac{1}{n_k} \log ||Df^{n_k}(x) || = \lambda(p),$$

$$\lim_{k \rightarrow +\infty} \frac{1}{m_k} \log ||Df^{m_k}(x) || = \lambda(q),$$
so $x \notin \mathcal{R}_f.$
 \end{proof}

\section{General Preliminaries on Anosov Diffeomorphism}

First let us define basic concepts. Let $M$ be a $C^{\infty}$ riemannian closed (compact, connected and boundaryless)  manifold.
A $C^1-$diffeomorphism $f: M \rightarrow M$ is called an absolute partially hyperbolic diffeomorphism if the tangent bundle $TM$ admits a $Df$ invariant tangent decomposition $TM =  E^s \oplus E^c \oplus E^u$ such that all unitary vectors $v^{s} \in E^{s}_x, v^{c} \in E^{c}_y, v^{u} \in E^{u}_z,$ for every $x,y,z \in M$ satisfy:

$$ ||D_x f v^s || < ||D_y f v^c || < ||D_z f v^u ||,$$
moreover,

$$||D_x f v^s || < 1 \;\mbox{and}\; ||D_z f v^u || > 1, $$
for a suitable norm.

When $TM =  E^s  \oplus E^u,$ where $E^s$ and $E^u$ is as above, then $f$ is called an Anosov diffeomorphism.

\begin{definition}[SRB measure] Let $f: M \rightarrow M$ be a $C^2$ diffeomorphism. An $f$ invariant Borel probability measure $\mu$ is called an SRB measure if $(f, \mu)$ has a positive Lyapunov exponent a.e. and $\mu$ has absolutely continuous conditional measures on unstable manifolds.
\end{definition}

\begin{theorem}[Ledrappier-Young, \cite{LY}] Let $f$ be a $C^2-$diffeomorphism and $\mu$ an $f-$invariant probability measure with a positive Lyapunov exponent a.e. Then $\mu$ is SRB measure of $f$ if and only if
$$h_{\mu}(f) = \int \sum_{\lambda_i > 0} \lambda_i \dim(E_i) d\mu. $$
\end{theorem}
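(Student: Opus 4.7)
The plan is to work with a measurable partition $\xi$ of $M$ subordinate to the unstable foliation $W^u$ associated to the positive Lyapunov exponents (in the Pesin sense, since we are not assuming global hyperbolicity): each atom $\xi(x)$ sits inside $W^u(x)$ and contains an open neighborhood of $x$ in $W^u(x)$ for $\mu$-a.e.\ $x$. A standard construction on Pesin regular sets produces such a $\xi$ with $f^{-1}\xi \geq \xi$ and with $\bigvee_{n\geq 0} f^{-n}\xi$ equal to the partition into points on a set of full $\mu$-measure. Under those properties, Rokhlin's formula reduces the Kolmogorov–Sinai entropy to $h_\mu(f)=h_\mu(f,\xi)=H_\mu(f^{-1}\xi\mid \xi)$. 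As a preliminary upper bound I would establish the Pesin–Ruelle inequality $h_\mu(f)\le \int \log J^u f \, d\mu$, where $J^u f=\det(Df|_{E^u})$; by Oseledec one has $\log J^u f = \sum_{\lambda_i>0}\lambda_i \dim E_i$ a.e., so this matches the right-hand side of the theorem.

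For the easy direction, SRB $\Rightarrow$ entropy formula, I would disintegrate $H_\mu(f^{-1}\xi\mid \xi)$ atomwise against the conditional measures $\mu_x^\xi$. The SRB assumption gives $\mu_x^\xi \ll \mathrm{Leb}_{W^u(x)}$, and the pushforward of $\mathrm{Leb}_{W^u(x)}$ under $f$ acquires the Jacobian factor $J^u f$. A direct change of variables shows that the conditional atoms of $f^{-1}\xi$ inside $\xi(x)$ carry total mass (in the reference Lebesgue scale) governed by $J^u f$, and integrating produces $H_\mu(f^{-1}\xi\mid \xi)=\int \log J^u f\, d\mu$, giving equality.

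For the converse, formula $\Rightarrow$ SRB, which I expect to be the main obstacle, the idea is to compare $\mu_x^\xi$ with $\mathrm{Leb}_{W^u(x)}$ via an increasing sequence of refinements $\xi_n = \bigvee_{k=0}^n f^{-k}\xi$. On each refinement one defines conditional densities of $\mu$ with respect to a Lebesgue reference disintegration along unstables; these densities form a martingale whose limit gives the Radon–Nikodym derivative of the absolutely continuous part of $\mu_x^\xi$. A Jensen-type inequality, applied atomwise in the entropy identity $H_\mu(f^{-1}\xi\mid\xi)=\int \log J^u f \, d\mu$, is strict unless this martingale converges to a genuine density rather than zero; equality therefore forces $\mu_x^\xi\ll \mathrm{Leb}_{W^u(x)}$. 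The delicate technical points, all in the Pesin (non-uniform) setting, are: absolute continuity of the holonomy maps between unstable manifolds, needed to glue local estimates into a coherent global disintegration; exhaustion by Pesin blocks of increasing hyperbolicity, on which distortion of $Df|_{E^u}$ along Bowen balls is uniformly controlled; and the measurability of the whole refinement scheme. This is essentially the content of the Ledrappier–Young machinery, and it is this converse direction that carries the depth of the theorem.
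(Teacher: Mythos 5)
The paper does not prove this statement at all: it is quoted verbatim as a known theorem of Ledrappier--Young and used as a black box (in Lemma 5.1 and in the two-dimensional argument), so there is no internal proof to compare yours against. Judged on its own terms, your outline correctly reproduces the architecture of the actual proof in the literature: a measurable partition $\xi$ subordinate to the Pesin unstable manifolds with $f^{-1}\xi \geq \xi$ and $\bigvee_{n\geq 0}f^{-n}\xi$ generating, the reduction $h_\mu(f)=H_\mu(f^{-1}\xi\mid\xi)$, the Ruelle inequality as the a priori upper bound, the change-of-variables computation with $J^u f$ for the direction ``SRB $\Rightarrow$ formula'' (this half is due to Ledrappier--Strelcyn), and the density-martingale/Jensen argument forcing $\mu^\xi_x\ll \mathrm{Leb}_{W^u(x)}$ for the converse. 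These are the right objects and the right logical skeleton.

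That said, what you have written is a roadmap, not a proof. The entire difficulty of the theorem lives in the converse direction, and your treatment of it ends by declaring that the delicate points are ``essentially the content of the Ledrappier--Young machinery.'' In particular, you do not construct the reference system of leafwise Lebesgue measures with the transformation rule under $f$, you do not prove that the information function $-\log\mu^\xi_x\bigl(f^{-1}\xi(x)\bigr)$ differs from $\log J^u f$ by an integrable coboundary, and you do not carry out the strict-inequality analysis in Jensen that converts equality of entropies into positivity of the limiting densities. One smaller point: absolute continuity of unstable holonomies is not really used ``to glue local estimates into a global disintegration'' in this argument; the Ledrappier--Young proof works directly with the conditional measures on the atoms of $\xi$, and holonomy absolute continuity enters elsewhere. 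Since the paper itself treats the theorem as an external citation, deferring to \cite{LY} is the appropriate resolution here, but your text should then be presented as a sketch with a citation rather than as a proof.
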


\begin{theorem}[Bowen, \cite{Bo75}] Let $f: M \rightarrow M$ be a $C^2-$Anosov diffeomorphism. Suppose that for every periodic point $p,$ with period $n_p$ we have $|\det(Df^{n_p}(p))| = 1,$ then $f$ preserves a unique borelian probability measure $\mu$ absolutely continuous with respect to Lebesgue measure $m,$ moreover $\frac{d\mu}{dm} = H $ is a positive H\"{o}lder function.
\end{theorem}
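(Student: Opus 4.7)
The plan is to combine Livsic's periodic-data theorem with Ledrappier--Young's entropy characterization of SRB measures, and to identify the $f$-SRB with the $f^{-1}$-SRB. The potential $\varphi(x) := \log\bigl|\det Df(x)\bigr|$ is H\"{o}lder continuous, and the hypothesis gives $S_{n_p}\varphi(p) = \log\bigl|\det Df^{n_p}(p)\bigr| = 0$ for every periodic orbit. Since $f$ is a transitive Anosov diffeomorphism, Livsic's theorem yields a H\"{o}lder continuous $u: M \to \mathbb{R}$ with $\varphi = u\circ f - u$. In particular, $\int \varphi\, d\nu = 0$ for every $f$-invariant Borel probability $\nu$, which rewrites as
$$\int \log\bigl|\det(Df|_{E^u})\bigr|\, d\nu \;=\; -\int \log\bigl|\det(Df|_{E^s})\bigr|\, d\nu.$$

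Let $\mu^+$ and $\mu^-$ be the unique SRB measures of $f$ and of $f^{-1}$, which exist by Sinai--Ruelle--Bowen theory for $C^2$ Anosov systems. Ledrappier--Young applied to $f$ gives the Pesin formula $h_{\mu^+}(f) = \int \log|\det(Df|_{E^u})|\, d\mu^+$, and applied to $f^{-1}$ gives
$$h_{\mu^-}(f) = h_{\mu^-}(f^{-1}) = -\int \log\bigl|\det(Df|_{E^s})\bigr|\, d\mu^-.$$
Substituting the cohomological identity above into the right-hand side shows that $\mu^-$ also satisfies Pesin's formula for $f$, so the converse half of Ledrappier--Young identifies $\mu^-$ as an SRB measure of $f$; uniqueness forces $\mu^+ = \mu^-$. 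Consequently $\mu := \mu^+$ has absolutely continuous conditional measures on both the unstable and the stable foliations of $f$.

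A standard Fubini-type argument on local product charts, using the absolute continuity of the $C^2$ Anosov stable and unstable holonomies, then converts the absolute continuity of both conditional families into absolute continuity of $\mu$ with respect to Lebesgue measure $m$. The density $H = d\mu/dm$ splits locally as the product of an unstable and a stable conditional density; each is given by the classical Sinai infinite product of ratios of the corresponding Jacobians, and since $E^u$, $E^s$ are H\"{o}lder subbundles and $Df$ is $C^1$, these products converge uniformly to positive H\"{o}lder functions, so $H$ is positive and H\"{o}lder. For uniqueness, any $m$-absolutely continuous $f$-invariant probability has absolutely continuous unstable conditionals and is therefore SRB for $f$, hence equals $\mu^+$ by SRB uniqueness.

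The main obstacle is the central coupling $\mu^+ = \mu^-$ through the Livsic coboundary: once this identification is in place, everything else (absolute continuity of holonomies, uniform H\"{o}lder estimates for the Sinai products, and the Fubini argument on local product charts) is standard for $C^2$ Anosov systems. The whole proof hinges on turning the vanishing of $\log|\det Df|$ on periodic orbits into the equality of the unstable and stable Pesin formulas for one and the same measure.
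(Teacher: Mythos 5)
The paper does not actually prove this statement: it is quoted as a classical theorem of Bowen with a citation to \cite{Bo75} and is used later only as a black box (at the end of Lemma \ref{constant}), so there is no internal proof to compare against. Your argument is a correct route to the conclusion, but it is genuinely different from Bowen's original one. Bowen works through thermodynamic formalism: the SRB measure of $f$ is the unique equilibrium state of the H\"older potential $-\log|\det(Df|_{E^u})|$, the periodic hypothesis shows via Livsic that this potential and the corresponding one for $f^{-1}$ differ by a coboundary plus a constant, so the two Gibbs states coincide, and a measure that is Gibbs for both potentials has local product structure with H\"older densities, whence absolute continuity. You instead obtain the coupling $\mu^{+}=\mu^{-}$ through the Ledrappier--Young entropy characterization of SRB measures, which is anachronistic relative to 1975 but fits exactly the toolkit this paper assembles (the Ledrappier--Young theorem is stated in the same section, and Lemma \ref{constant} runs the very same $f$ versus $f^{-1}$ SRB comparison); what the entropy route buys is that you never need to construct equilibrium states, only to quote existence and uniqueness of the SRB measure. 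Two points you gloss over deserve a line each: first, the rewriting of $\int\log|\det Df|\,d\nu=0$ as $\int\log|\det(Df|_{E^u})|\,d\nu=-\int\log|\det(Df|_{E^s})|\,d\nu$ is valid only because the angle defect between $E^u$ and $E^s$ contributes a continuous coboundary, which integrates to zero against invariant measures; second, Livsic's theorem and the uniqueness of the SRB measure both require $f$ transitive, which is implicit in Bowen's formulation and automatic in this paper's applications on $\mathbb{T}^d$ but should be stated as a hypothesis. With those caveats the proof is sound, and the final step recovering positivity and H\"older regularity of the density from the Sinai infinite products and the H\"older Jacobians of the holonomies is standard and correctly invoked.
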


It is known by \cite{Bo75} that a $C^2-$axiom A diffeomorphism has a unique SRB measure. It includes transitive Anosov diffeomorphisms, particularly Anosov diffeomorphism on $\mathbb{T}^d.$

\begin{lemma}[Anosov Closing Lemma] \label{closing}
Let $f: M \rightarrow M$ be a $C^{1+\alpha}$ diffeomorphism preserving a hyperbolic Borel probability measure. For all $\delta > 0 $ and $\epsilon > 0$ there exists $\beta= \beta(\delta, \epsilon) >0$ such that if $x, f^{n(x)}(x) \in \Delta_{\delta}$ (Pesin block) for some $n(x) >0$ and $d(x, f^{n(x)}(x)) < \beta$ then there exists a hyperbolic periodic point of period $n(x)$, $z$ with $d(f^k(x), f^k(z)) \leq \epsilon$ for all $0 \leq k \leq n(x)-1.$
 \end{lemma}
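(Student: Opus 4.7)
The plan is to realize the periodic point $z$ as a fixed point of $f^{n(x)}$ in coordinates adapted to the hyperbolic structure on the Pesin block $\Delta_\delta$, then deduce the shadowing estimate from the exponential rates of the hyperbolic splitting.

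First I would invoke Pesin regularity on $\Delta_\delta$: there is a family of Lyapunov charts $\phi_y:B_{r(\delta)}(0)\subset\R^d\to M$ centered at each $y\in\Delta_\delta$, an $f$-invariant splitting $\R^d=E^s\oplus E^u$, and an adapted Lyapunov norm in which $Df$ becomes block-diagonal with contraction bounded by $e^{-\chi}$ on $E^s$ and expansion bounded below by $e^{\chi}$ on $E^u$, for some $\chi=\chi(\delta)>0$. The chart radius $r(\delta)$, the Lipschitz constants of the transition maps, and the Lipschitz size of the nonlinear remainder of $f$ in these charts are all controlled in terms of $\delta$ and the Pesin block parameters. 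Write $f^{n(x)}$ in the chart $\phi_x$ as $F:B_{r(\delta)}(0)\to\R^d$. Since $d(x,f^{n(x)}(x))<\beta$, one has $\|F(0)\|\leq C_0\beta$, and by the chain rule the linear part $L=DF(0)$ inherits the hyperbolic splitting with the same $\chi$-rate.

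Second, the equation $F(\xi)=\xi$ splits into stable and unstable components as
$$\xi^s = L^s\xi^s+N^s(\xi)+c^s,\qquad \xi^u=(L^u)^{-1}\bigl(\xi^u-N^u(\xi)-c^u\bigr),$$
where $N=F-L-F(0)$ is the nonlinear remainder and $(c^s,c^u)=F(0)$. Both halves define contractions on a small ball: the first because $\|L^s\|\leq e^{-\chi}$, the second because $\|(L^u)^{-1}\|\leq e^{-\chi}$, each sharpened by taking the nonlinear term sufficiently small (which is guaranteed if $\beta$ is chosen small enough depending on $\delta$). A joint Banach fixed-point argument then produces a unique $\xi^*$ with $\|\xi^*\|\leq C_1(\delta)\beta$, and $z:=\phi_x(\xi^*)$ is a hyperbolic periodic point of period $n(x)$.

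Finally, to prove the shadowing bound, set $\xi_k=\phi_{f^k(x)}^{-1}(f^k(z))$. The unstable component $\xi_k^u$ is pulled back from $\xi_n^u$ and hence decays exponentially backward at rate $e^{-\chi}$, while the stable component $\xi_k^s$ decays exponentially forward at rate $e^{-\chi}$ starting from $\xi_0^s$. Since both endpoints are $O(\beta)$ in the Lyapunov norm, one obtains $\|\xi_k\|\leq C_2(\delta)\beta$ for all $0\leq k\leq n(x)-1$, giving $d(f^k(x),f^k(z))\leq\epsilon$ as soon as $\beta<\epsilon/C_2(\delta)$. The main obstacle is that intermediate points $f^k(x)$ of the orbit segment need not lie in $\Delta_\delta$, so the Lyapunov charts at these points may deteriorate. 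This is handled by using tempered Pesin chart families along the orbit: the chart radius and nonlinear estimate deteriorate only sub-exponentially with $k$, while the hyperbolic contraction/expansion is genuinely exponential at a fixed rate $\chi$, so all geometric series close up uniformly in $n(x)$.
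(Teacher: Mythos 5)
The paper does not prove this lemma at all: it is quoted as a classical result (Katok's closing lemma for nonuniformly hyperbolic systems), and the only comment added is that for an Anosov diffeomorphism the Pesin block may be taken to be the whole manifold. So your proposal has to stand on its own. Its architecture is the standard one --- Lyapunov charts with adapted norms on $\Delta_\delta$, a fixed point of the return map, forward propagation of the stable component and backward propagation of the unstable component for the shadowing bound, and temperedness to control the charts at intermediate times --- and the first and third paragraphs are essentially correct sketches of those steps.

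The gap is in your second paragraph, where the fixed point is actually produced. You write $F=\phi_x^{-1}\circ f^{n(x)}\circ\phi_x$ in the single chart at $x$ and claim the nonlinear remainder $N=F-L-F(0)$ is ``sufficiently small \ldots\ if $\beta$ is chosen small enough depending on $\delta$.'' That cannot be right: $\beta$ controls only the displacement $\|F(0)\|$, not the Lipschitz constant of $N$. On a ball of radius determined by $\delta$ alone, the $C^{1+\alpha}$ size of $f^{n(x)}$ in a fixed chart grows exponentially in $n(x)$ (generically faster than the hyperbolicity gap of $L=Df^{n(x)}(x)$), so the ball on which your two contraction equations actually contract shrinks exponentially with $n(x)$, and placing the fixed point inside it would force $\beta$ to depend on $n(x)$ --- contradicting the uniformity $\beta=\beta(\delta,\epsilon)$ that is the whole content of the lemma. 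The standard repair, consistent with your own third paragraph, is never to compose inside one chart: write $f$ along the orbit as maps $F_k=\phi_{f^{k+1}(x)}^{-1}\circ f\circ\phi_{f^k(x)}$, each a \emph{uniformly} small Lipschitz perturbation of a hyperbolic linear map in the Lyapunov norms (this uniformity is exactly what Pesin theory provides, with tempered deterioration of the chart sizes), and run the Banach fixed point argument on the space of sequences $(\xi_0,\dots,\xi_{n(x)-1})$ subject to the closing identification $\xi_{n(x)}=\xi_0$, which is legitimate because the charts at $x$ and $f^{n(x)}(x)$, both in $\Delta_\delta$, differ by an affine map of size $O(\beta)$ composed with a near-isometry. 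The unique fixed sequence then yields the periodic point $z$, its hyperbolicity, and the uniform bound $\|\xi_k\|\le C(\delta)\beta$ in a single stroke.
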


Note that, for the Anosov diffeomorphism $f,$ we may take (a.e)  $\Lambda_{\delta} = \mathbb{T}^d$ for some $\delta>0.$

\begin{definition}[Specification Property] Let $f: M \rightarrow M$ be a diffeomorphism. We say that $f$ has the specification property if given $\varepsilon > 0$ there is a relaxation time $N\in \mathbb{N}$  such that every $N-$spaced collection of orbit segments is $\varepsilon-$shadowed by an actual orbit. More precisely, for points $x_1, x_2, \ldots, x_n$ and legths $k_1, \ldots, k_n \in \mathbb{N}$ one can find times $a_1, \ldots, a_n$ such that $a_{i+1} \leq a_i + N$ and a point $x$ such that $d(f^{a_i + j}(x), f^{j}(x_i) ) < \varepsilon $ whenever $0 \leq j \leq k_i.$ Moreover, one can choose $x$ a periodic point with period no more than $a_n + k_n + N.$
\end{definition}

\begin{theorem}[Bowen, \cite{Bo74}] Every transitive Anosov diffeomorphism has the specification property.
\end{theorem}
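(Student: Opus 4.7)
The plan is to derive the specification property from the Anosov shadowing lemma combined with topological mixing, following the standard template for uniformly hyperbolic systems. First I would invoke the shadowing lemma for Anosov diffeomorphisms: for each $\varepsilon > 0$ there exists $\delta > 0$ such that every bi-infinite $\delta$-pseudo-orbit is $\varepsilon$-shadowed by a genuine orbit, and by expansiveness the shadow is unique.

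Next I would upgrade transitivity to topological mixing. On $\mathbb{T}^d$, which is the case of interest in this paper, this is immediate from Franks' conjugacy to the linearization $L$, since hyperbolic toral automorphisms are mixing. In general one invokes Smale's spectral decomposition: by transitivity the nonwandering set equals $M$, hence $M$ is a single basic set, and the canonical decomposition of a basic set into mixing components must collapse to a single component because $f$ itself is transitive. Mixing then supplies a uniform transition time $N = N(\delta)$ with the property that for any two points $p, q \in M$ and any integer $n \geq N$ there exists $y$ with $d(y, p) < \delta$ and $d(f^n(y), q) < \delta$.

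Given $\varepsilon > 0$, I set $\delta = \delta(\varepsilon)$ from shadowing and declare $N = N(\delta)$ to be the relaxation time. For prescribed segments $(x_i, k_i)_{i=1}^{n}$, put $a_1 = 0$ and $a_{i+1} = a_i + k_i + N$; for each $i < n$, pick by mixing a transition point $y_i$ satisfying $d(y_i, f^{k_i}(x_i)) < \delta$ and $d(f^N(y_i), x_{i+1}) < \delta$. Splicing the pieces $x_i, f(x_i), \ldots, f^{k_i - 1}(x_i)$ together with $y_i, f(y_i), \ldots, f^{N-1}(y_i)$ yields a $\delta$-pseudo-orbit; its shadow $x$ then satisfies $d(f^{a_i + j}(x), f^j(x_i)) < \varepsilon$ for every $0 \leq j \leq k_i$. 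For the periodic statement I would insert one further mixing transition from near $f^{k_n}(x_n)$ back to near $x_1$, producing a periodic $\delta$-pseudo-orbit of period at most $a_n + k_n + N$. Its shadow is automatically periodic with the same period, because applying $f^{\mathrm{period}}$ to the shadow produces another orbit that shadows the very same pseudo-orbit, and expansiveness forces the two orbits to coincide.

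The main obstacle is the mixing step: bare transitivity only delivers some visit time, whereas specification demands a uniform bound on transition lengths independent of the targets. Promoting transitivity to mixing is therefore the genuine input, available through Franks conjugacy in the toral setting and through the spectral decomposition theorem in general. Once mixing is in hand, the remainder reduces to a routine concatenation of orbit segments into a pseudo-orbit and an application of shadowing, with expansiveness handling the promotion of the shadow to a periodic orbit.
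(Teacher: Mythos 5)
The paper does not prove this statement: it is quoted from Bowen \cite{Bo74} and used as a black box, so there is no internal proof to compare against. Your route --- concatenate the prescribed orbit segments into a $\delta$-pseudo-orbit using mixing transitions, shadow it, and use uniqueness of shadowing (expansiveness) to force the shadow of a periodic pseudo-orbit to be a genuine periodic point --- is the standard modern proof of the specification theorem (cf.\ the Specification Theorem in \cite{K}), and the splicing, shadowing, and periodicity steps are all correct up to routine $\varepsilon/2$ bookkeeping at the junction times $j=k_i$, where the pseudo-orbit point is $y_i$ rather than $f^{k_i}(x_i)$.

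However, the one step you yourself single out as ``the genuine input'' is the one you justify incorrectly. You assert that the cyclic decomposition of the basic set ``must collapse to a single component because $f$ itself is transitive.'' That is false as stated: a basic set $\Lambda=X_1\cup\dots\cup X_n$ with the $X_j$ closed, pairwise disjoint, cyclically permuted by $f$, and with $f^{n}|_{X_j}$ mixing is transitive for every $n$ (a transitive, non-mixing subshift of finite type is the model case), so transitivity alone does not force $n=1$; indeed specification genuinely fails on such a set when $n>1$, which is exactly why this step cannot be waved through. What rescues the theorem for an Anosov diffeomorphism is that here $\Lambda=\Omega(f)=M$ and $M$ is connected: the pieces $X_j$ are closed, disjoint, and cover $M$, hence each is also open, and connectedness forces $n=1$, i.e.\ topological mixing. (In the toral setting of this paper your alternative argument via Franks' semiconjugacy to the mixing automorphism $L$ is fine.) With the justification of the mixing step corrected in this way, the proof goes through.
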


Let us speak about rigidity of Anosov diffeomorphisms of the torus, that we will use in this work.

\begin{theorem}[De La Llave, \cite{LLAVE}] Let $f,g$ be two $C^k, k\geq2,$ Anosov diffeomorphism of $\mathbb{T}^2$ and $h$ a homeomorphism of $\mathbb{T}^2,$ satisfying
$$h\circ f = g\circ h.$$
If the Lyapunov exponents at corresponding periodic orbits are the same, then $h \in C^{k - \varepsilon}.$
\end{theorem}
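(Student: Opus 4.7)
The standard approach (De La Llave--Marco--Moriy\'on) is to prove that $h$ is as smooth as claimed along each of the two one-dimensional invariant foliations, and then invoke Journ\'e's regularity lemma. Set
\[
\varphi^s(x) = \log\|Df|_{E^s_f(x)}\|, \qquad \psi^s(y) = \log\|Dg|_{E^s_g(y)}\|,
\]
which are $C^{k-1}$ functions on $\mathbb{T}^2$. The conjugacy $h$ is automatically H\"older and carries $W^s_f$ (resp.\ $W^u_f$) onto $W^s_g$ (resp.\ $W^u_g$). The hypothesis that Lyapunov exponents coincide at corresponding periodic orbits says exactly that the H\"older cocycle $x\mapsto \varphi^s(x)-\psi^s(h(x))$ over $f$ has vanishing Birkhoff sums around every periodic orbit; by Livshitz's theorem for transitive Anosov diffeomorphisms there exists a H\"older $u^s$ with
\[
\varphi^s(x) - \psi^s(h(x)) = u^s(x) - u^s(f(x)),
\]
and the analogous statement holds on the unstable side with a H\"older $u^u$.

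Next I would argue that $h$ is $C^{k-\varepsilon}$ along stable leaves. Heuristically, differentiating $g\circ h = h\circ f$ tangent to $E^s$ gives $|D^s h(f(x))|\,e^{\varphi^s(x)}/e^{\psi^s(h(x))} = |D^s h(x)|$, i.e.\ $|D^s h(x)|\,e^{-u^s(x)}$ is $f$-invariant and hence constant by ergodicity, suggesting $|D^s h(x)| = C\,e^{u^s(x)}$. To make this rigorous one defines the candidate density $\rho(x) = Ce^{u^s(x)}$ on a stable leaf, parametrises the leaf by arclength from a reference point, and integrates $\rho$ along the leaf; this produces an absolutely continuous map that semiconjugates the one-dimensional hyperbolic dynamics of $f|_{W^s_f}$ with that of $g|_{W^s_g}$ on its image, and by a Shub--Sullivan type uniqueness argument (exactly parallel to what is used in the proof of Theorem~\ref{teo1}) it must coincide with $h|_{W^s_f}$. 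This realizes $h|_{W^s_f}$ as a primitive of a H\"older function, hence at least $C^{1+\alpha}$; a bootstrap using the $C^{k-1}$ regularity of $\varphi^s$ and $\psi^s$ and repeated differentiation of the cohomological equation along the leaf upgrades this to $C^{k-\varepsilon}$. Running the argument with $f$ replaced by $f^{-1}$ gives $h|_{W^u_f}\in C^{k-\varepsilon}$.

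Finally, since $W^s_f$ and $W^u_f$ are transverse $C^{k-1}$ foliations of $\mathbb{T}^2$, Journ\'e's lemma promotes leafwise $C^{k-\varepsilon}$ regularity to global $C^{k-\varepsilon}$ regularity of $h$. The main obstacle is the leafwise smoothness step: Livshitz alone produces only H\"older data, and upgrading to $C^{k-\varepsilon}$ requires either a careful bootstrap on the cohomological equation or the explicit identification of $h$ on each stable leaf with a primitive of the density above. Combining Livshitz with the fact that stable and unstable holonomies of $f$ and $g$ are themselves as smooth as the diffeomorphisms (standard Hirsch--Pugh--Shub theory in codimension one) is what closes this gap and delivers the announced regularity.
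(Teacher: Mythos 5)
The paper does not prove this statement: it is quoted verbatim as a known result of de la Llave and is used as a black box (in the proof of Theorem~\ref{teo2}), so there is no internal proof to compare against. Measured against the literature proof, your outline is essentially the standard de la Llave--Marco--Moriy\'on argument and its architecture is sound: Livshitz from the periodic eigenvalue data, identification of $h$ restricted to a stable (resp.\ unstable) leaf with the primitive of an explicit density, a bootstrap on the leafwise cohomological equation, and finally Journ\'e's lemma. Two points deserve correction. First, $\varphi^s(x)=\log\|Df|_{E^s_f(x)}\|$ is \emph{not} $C^{k-1}$ on $\mathbb{T}^2$: for a $C^k$ Anosov diffeomorphism of $\mathbb{T}^2$ the bundles $E^s_f, E^u_f$ are in general only $C^{1+\alpha}$ transversally, so $\varphi^s$ is only $C^{1+\alpha}$ globally; what saves the argument is that $\varphi^s$ restricted to each stable leaf is $C^{k-1}$ (the leaf is $C^k$ and $E^s_f$ along it is its tangent line), and leafwise regularity is all the bootstrap needs. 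For the same reason the foliations $W^s_f, W^u_f$ are not $C^{k-1}$ foliations; Journ\'e's lemma fortunately requires only continuous transverse foliations with uniformly $C^{k-\varepsilon}$ leaves, so the conclusion stands but your justification should be phrased that way. Second, the specific paper cited here (de la Llave, \emph{Comm.\ Math.\ Phys.} 150, 1992) runs the leafwise-smoothness step through SRB measures and the smoothness of their conditional densities on unstable leaves rather than through your ``constant of motion'' $|D^sh|e^{-u^s}$; the two routes are equivalent in outcome, and your Shub--Sullivan-style identification of $h$ with the primitive of a density is in fact closer in spirit to the paper's own proof of Theorem~\ref{ss} and Theorem~\ref{teo1}. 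With the regularity claims restated leafwise, your sketch is a correct account of why the theorem holds.
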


Rigidity in three dimensional torus was studied in \cite{GoGu}. We use indirectly the following result, that is strongly used in \cite{MT2}.

\begin{theorem}[Gogolev-Guysinsky, \cite{GoGu}] Let f and g be Anosov diffeomorphisms of $\mathbb{T}^3$ and $$h\circ f = g\circ h,$$
where $h$ is a homeomorphism homotopic to identity. Suppose that periodic data of $f$ and $g$ coincide, meaning  Lyapunov exponents at corresponding periodic orbits are the same.
Also assume that $f$ and $g$ can be viewed as partially hyperbolic diffeomorphisms:
$$E^s_g \oplus E^{wu}_g \oplus E^{su}_g = T\mathbb{T}^3 = E^s_f \oplus E^{wu}_f \oplus E^{su}_f.$$
Then the conjugacy $h$ is $C^{1 + \nu},$ for some $\nu > 0.$
\end{theorem}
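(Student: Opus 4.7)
The plan is to establish smoothness of $h$ along each of the three one-dimensional invariant foliations $W^s$, $W^{wu}$, $W^{su}$ separately, and then to glue the regularities across transverse foliations by means of Journ\'e's regularity lemma. A preliminary observation is that, since $h$ is homotopic to the identity, $f$ and $g$ share the same linearization, and in particular $h$ matches the dynamically defined invariant foliations: $h(W^{\ast}_f) = W^{\ast}_g$ for $\ast \in \{s, u\}$, and within each two-dimensional unstable leaf the strong unstable subfoliation (the direction contracted most strongly by $f^{-1}$) is preserved, hence so is the weak unstable one.

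Along each one-dimensional leaf I would run a Livsic-type cohomology argument. For each $\ast \in \{s, wu, su\}$ the H\"older cocycles $\log\|Df|_{E^{\ast}_f}\|$ and $(\log\|Dg|_{E^{\ast}_g}\|)\circ h$ have identical Birkhoff sums on every periodic orbit, by the hypothesis that the $\ast$-Lyapunov exponents coincide at corresponding periodic points. The Livsic theorem then produces a H\"older coboundary relating them, and combining this with absolute continuity of the holonomies and the ODE argument used in the proof of the Shub--Sullivan theorem (Theorem \ref{ss}), one upgrades the restriction of $h$ to each leaf to $C^{1+\nu}$, with estimates uniform in the leaf. At this stage one has essentially recovered, within each two-dimensional invariant leaf, the conclusion of de la Llave's $\mathbb{T}^2$ rigidity theorem that was used in the proof of Theorem \ref{teo2}.

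With $C^{1+\nu}$ regularity of $h$ along the leaves of $W^s$, $W^{wu}$ and $W^{su}$ in hand, Journ\'e's lemma applied inside each two-dimensional unstable leaf $W^u = W^{wu} \oplus W^{su}$ would give that $h$ is $C^{1+\nu}$ along leaves of $W^u$, and a second application to the transverse pair $(W^s, W^u)$ would yield $h \in C^{1+\nu}(\mathbb{T}^3)$. I expect the main obstacle to be the weak unstable step: the foliation $W^{wu}$ sits inside the two-dimensional unstable leaves and its transverse regularity within $W^u$ is generally only H\"older, so verifying the uniform hypotheses required by Journ\'e inside $W^u$ is delicate. This is precisely where the assumption that the Lyapunov exponents match along \emph{all three} invariant directions, not merely the two extremal ones, becomes decisive, since it is needed to match non-stationary normal form / affine structures along the unstable leaves, as in the Gogolev--Guysinsky argument.
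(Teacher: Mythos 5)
The first thing to say is that the paper does not prove this statement at all: it is imported verbatim from Gogolev--Guysinsky \cite{GoGu} and used as a black box in the proof of Theorem \ref{teo3} (to upgrade the $C^1$ conjugacy of Lemma \ref{micena} to $C^{1+\nu}$), so there is no internal proof to compare yours against. Judged on its own terms, your outline does follow the broad strategy of the cited paper --- leafwise regularity along the three one-dimensional invariant foliations, then two applications of Journ\'e's lemma --- but two of its steps are genuine gaps rather than routine verifications.

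The first gap is the ``preliminary observation'' that $h$ carries $W^{su}_f$ to $W^{su}_g$ (and hence $W^{wu}_f$ to $W^{wu}_g$). The strong unstable leaf is characterized inside the unstable leaf by a backward contraction rate, but $h$ is a priori only bi-H\"older, and a H\"older change of variables distorts exponential rates by the H\"older exponent, so the characterization is not automatically preserved; establishing $h(W^{su}_f)=W^{su}_g$ is a substantive lemma in \cite{GoGu}, and the analogous matching of intermediate foliations is exactly why the present paper must assume $f$ is $C^1$-close to $L$ and invoke \cite{FPS} in the higher-dimensional Theorem \ref{teo4}. The second gap is the weak-unstable step. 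The Livsic-plus-ODE mechanism you propose (matching periodic Birkhoff sums, producing a H\"older coboundary, then integrating densities as in the proof of Theorem \ref{ss}) works for the extremal foliations $W^s$ and $W^{su}$ because there one has an invariant family of absolutely continuous leafwise conditional (SRB-type) measures to play the role of $\omega_f$ and $\omega_g$; no such family exists along the intermediate foliation $W^{wu}$, whose transverse regularity inside $W^u$ is only H\"older. You correctly flag this as ``delicate,'' but what you describe as an obstacle to verifying Journ\'e's hypotheses is in fact the core of the theorem: Gogolev--Guysinsky dispose of it with a separate argument (non-stationary normal forms and affine structures on unstable leaves, combined with density of periodic orbits), not with the same Livsic argument. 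As written, the proposal reduces the hardest part of the statement to a closing remark rather than proving it.
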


In higher dimensional torus $\mathbb{T}^d, d \geq 4,$ same periodic data does not imply regularity of the conjugacy, De La Llave in \cite{LLAVE}  constructed two Anosov diffeomorphisms on $\mathbb{T}^4, $ with the same periodic data which are only
H\"{o}lder conjugated. Saghin and Yang in \cite{SY} proved.

\begin{theorem}Let $L$ be an irreducible Anosov automorphism of $\mathbb{T}^d, d\geq 3, $ with simple real
spectrum. If $f$ is a $C^2$
volume preserving diffeomorphism $C^1$ close to $L$ and has
the same Lyapunov exponents of $L,$ at corresponding periodic points,  then $f$ is $C^{1+\varepsilon}$
conjugated to $L$ for some $ \varepsilon > 0.$
\end{theorem}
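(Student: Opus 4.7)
The plan is to show that the constant periodic data hypothesis, combined with the $C^1$-proximity to $L$, forces $f$ to have exactly the same periodic data as its linearization, after which the Saghin--Yang theorem cited above yields the $C^{1+\varepsilon}$ conjugacy. Thus the proof proceeds in two stages: (i) upgrade ``constant'' to ``equal to $L$'s data'', and (ii) invoke rigidity.

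First, since $L$ is diagonalizable over $\mathbb{R}$ with $d=k+n$ distinct eigenvalues, the persistence of dominated splittings gives, for every $f$ sufficiently $C^1$-close to $L$, an $f$-invariant H\"older decomposition $T\mathbb{T}^d = E^s_{1,f}\oplus\cdots\oplus E^s_{k,f}\oplus E^u_{1,f}\oplus\cdots\oplus E^u_{n,f}$ into one-dimensional sub-bundles close to the corresponding eigenlines of $L$ and ordered by their rates of (co)contraction. By domination, each partial sum $E^u_{1,f}\oplus\cdots\oplus E^u_{i,f}$ is uniquely integrable to a continuous $f$-invariant foliation $\mathcal{F}^u_{i,f}$, producing a flag $\mathcal{F}^u_{1,f}\subset\cdots\subset\mathcal{F}^u_{n,f}=W^u_f$ (and analogously a stable flag). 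Each level is dynamically characterised by the exponential rate of convergence of backward iterates, so the Franks conjugacy $h$ transports $\mathcal{F}^u_{i,f}$ onto $\mathcal{F}^u_{i,L}$ for every $i$, and likewise in the stable direction.

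Next, I match the Lyapunov exponents via entropy along sub-foliations. The constant periodic data hypothesis says that the H\"older cocycle $\log\|Df|_{E^u_{i,f}}\|$ has constant Birkhoff sum $n\lambda^u_i(f)$ at every $n$-periodic orbit; by the Liv\v{s}ic theorem for Anosov diffeomorphisms (applied via the closing lemma cited earlier and the density of periodic orbits coming from specification), this cocycle is cohomologous to the constant $\lambda^u_i(f)$, so the Lyapunov exponent of every $f$-invariant measure along $E^u_{i,f}$ equals $\lambda^u_i(f)$. Applying the variational principle for $u$-entropies along sub-foliations (in the spirit of Ledrappier--Young, or in the Hu--Hua--Wu formulation) to the SRB measure yields
\begin{equation*}
h_{\mathrm{top}}(f,\mathcal{F}^u_{i,f}) \;=\; \sum_{j\le i}\lambda^u_j(f),
\end{equation*}
and the analogous identity for $L$ reads $h_{\mathrm{top}}(L,\mathcal{F}^u_{i,L})=\sum_{j\le i}\lambda^u_j(L)$. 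Since $h$ carries the flags to each other and leaf-wise topological entropy is a conjugacy invariant, $\sum_{j\le i}\lambda^u_j(f)=\sum_{j\le i}\lambda^u_j(L)$ for every $i$, and taking successive differences gives $\lambda^u_i(f)=\lambda^u_i(L)$. Running the symmetric argument on $f^{-1}$ produces $\lambda^s_i(f)=\lambda^s_i(L)$ as well.

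Finally, $f$ and $L$ now have identical periodic data, so in particular $|\det Df^{n_p}(p)|=|\det L^{n_p}|=1$ at every periodic point; Bowen's theorem cited above provides a unique SRB measure equivalent to Lebesgue with H\"older density, placing $f$ in the hypotheses of the Saghin--Yang theorem. That theorem then delivers the $C^{1+\varepsilon}$ conjugacy between $f$ and $L$. The main obstacle I foresee lies in the second step: the constant-data assumption only pins down the sum of the exponents through the total topological entropy, and separating them eigenline by eigenline requires simultaneously the persistence of the full intermediate flag (secured by the simple real spectrum of $L$ and the $C^1$-closeness) and a leaf-wise variational principle whose entropies transport cleanly through a merely H\"older conjugacy.
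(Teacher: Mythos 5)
Your proposal does not prove the stated theorem: its final step is to ``invoke the Saghin--Yang theorem,'' but the statement you were asked to prove \emph{is} the Saghin--Yang theorem. The argument is therefore circular. What you have actually written is, in substance, the paper's proof of Theorem~\ref{teo4}: starting from constant (but unspecified) periodic data you use the persistence of the dominated splitting and the flag of intermediate unstable foliations, the fact that the Franks conjugacy carries $W^{u,f}_{(1,i)}$ to $W^{u,L}_{(1,i)}$, and the Hu--Hua--Wu identity $h_{top}(f,\mathcal{W})=\chi_{\mathcal{W}}(f)$ together with the conjugacy-invariance of leafwise entropy, to deduce $\lambda^u_i(f)=\lambda^u_i(L)$ for each $i$. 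That reduction is essentially correct and is exactly how the paper derives Theorem~\ref{teo4} \emph{from} the quoted result; but it is a reduction to the quoted result, not a proof of it. Note also that the hypothesis of the statement already asserts equality of the periodic exponents with those of $L$, so your stage (i) is not even needed here.

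The missing content is the genuinely hard part: given that $f$ is volume preserving, $C^1$-close to the irreducible automorphism $L$ with simple real spectrum, and already has the same periodic Lyapunov exponents as $L$, one must show that the conjugacy $h$ is $C^{1+\varepsilon}$. De la Llave's four-dimensional example, mentioned in the paper, shows that matching periodic data alone does not suffice when $d\geq 4$, so any proof must exploit the $C^1$-closeness, the irreducibility, and the one-dimensionality of the intermediate bundles in an essential way --- typically by establishing smoothness of $h$ along each one-dimensional intermediate foliation (comparing conditional measures of volume on the leaves with the affine parameters obtained by normalizing the derivative cocycle, via Liv\v{s}ic-type arguments and absolute continuity of the intermediate foliations), and then assembling regularity along the full flag by a Journ\'e-type lemma. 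None of this appears in your write-up. For the record, the paper itself does not prove this statement either; it imports it from \cite{SY}, so there is no internal proof to compare against beyond the citation.
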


\section{Two dimensional case}

 First we remember the statement of Theorem \ref{teo2}.
\begin{theorem}
Consider $f: \mathbb{T}^2 \rightarrow \mathbb{T}^2$ a $C^r, r\geq 2,$ Anosov diffeomorphism. Suppose that for each $\ast \in \{s,u\},$ we have $\lambda^{\ast}_f(p) = \lambda^{\ast}_f(q), $ for any $p,q$ periodic points of $f,$ then $f$ is $C^1 $ conjugated with its linearization $L.$
\end{theorem}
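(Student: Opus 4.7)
The plan is to reduce the theorem to the De La Llave rigidity result cited in the preliminaries: once we know that $f$ and its linearization $L$ have identical Lyapunov exponents at every pair of corresponding periodic orbits, the Franks homeomorphism conjugacy is automatically $C^{r-\varepsilon}$, and in particular $C^1$. The hypothesis tells us that $\lambda^u_f(p)$ and $\lambda^s_f(p)$ are independent of $p$, so the whole content of the proof is to identify these two constants with $\log|\lambda^u_L|$ and $\log|\lambda^s_L|$, where $\lambda^u_L,\lambda^s_L$ are the eigenvalues of $L$.

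First, I would verify that every ergodic $f$-invariant Borel probability measure $\mu$ has its two Lyapunov exponents equal to $\lambda^u_f$ and $\lambda^s_f$. Since $\phi(x)=\log\|Df|_{E^u_f(x)}\|$ is continuous on $\mathbb{T}^2$, the Anosov closing lemma lets any $\mu$-generic finite orbit segment be shadowed by a periodic orbit whose Birkhoff average of $\phi$ is, by hypothesis, exactly $\lambda^u_f$; letting the shadowing error tend to zero and the segment length to infinity yields $\int \phi\, d\mu = \lambda^u_f$, and the same argument applied to $\log\|Df|_{E^s_f}\|$ gives the analogous statement in the stable direction.

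Next, I apply this to the unique SRB measure $\mu^+$ of $f$ provided by Bowen's theorem for transitive Anosov systems. Pesin's entropy formula, in the form of the Ledrappier--Young characterization of SRB stated above, together with $\dim E^u_f = 1$, gives $h_{\mu^+}(f) = \lambda^u_f$. In the reverse direction, Ruelle's inequality yields $h_\nu(f)\leq \lambda^u_f$ for every invariant $\nu$, so by the variational principle $h_{\mathrm{top}}(f)\leq \lambda^u_f$. Combining,
\[
\lambda^u_f \;=\; h_{\mu^+}(f) \;\leq\; h_{\mathrm{top}}(f) \;\leq\; \lambda^u_f,
\]
hence $\lambda^u_f = h_{\mathrm{top}}(f)$. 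Since topological entropy is a conjugacy invariant and $h_{\mathrm{top}}(L)=\log|\lambda^u_L|$ for the hyperbolic toral automorphism $L$, we obtain $\lambda^u_f = \log|\lambda^u_L|$. Running the identical argument for $f^{-1}$ (whose hypotheses are inherited, with the roles of stable and unstable swapped) gives $\lambda^s_f = \log|\lambda^s_L|$.

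With the periodic data of $f$ thus matched to that of $L$, the De La Llave theorem delivers the desired regularity. The step I expect to require the most care is the first one---propagating the constant periodic Lyapunov data to all invariant measures---which, although a standard consequence of the closing lemma together with density of periodic orbits, needs some attention to the uniformity in the shadowing estimates and the continuity of the invariant distributions $E^s_f,E^u_f$. Everything downstream is a direct application of the entropy formulas already collected in the preliminaries.
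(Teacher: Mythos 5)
Your proposal is correct and follows essentially the same route as the paper: propagate the constant periodic exponents to all invariant measures via the Anosov closing lemma, identify $\lambda^u_f$ with $h_{\mathrm{top}}(f)=h_{\mathrm{top}}(L)=\log|\lambda^u_L|$ through the SRB measure, the Ledrappier--Young entropy formula and Ruelle's inequality, repeat for $f^{-1}$, and conclude with De La Llave's rigidity theorem. No substantive differences to report.
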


\begin{proof} Given $\mu$ an $f-$invariant probability measure, by the Closing lemma for hyperbolic measures, we have $\lambda^{\ast}_{\mu}(x, f) =  \lambda^{\ast}_f(p), \ast \in \{s,u\},$ for $\mu$ almost everywhere $x \in \mathbb{T}^2,$  here $p$ is a given periodic point for $f. $
By Ruelle formula we have $$h_{\mu}(f) \leq \lambda^u_f(p).$$
Since $f$ is $C^r, r \geq 2, $ there is a unique SRB probability measure $\nu$ invariant for $f.$ In this case
$$h_{\nu}(f) = \int_{\mathbb{T}^2} \lambda^{u}_{\mu}(x, f) = \lambda^u_f(p). $$

So $\nu $ is the maximal entropy measure, then by variational principal we have $$\lambda^u_{L} = h_{top}(L)= h_{top}(f) =  h_{\nu}(f) = \lambda^{u}_f(p).$$
Taking $f^{-1},$ analogously we conclude that $$\lambda^{s}_f(p) = \lambda^{s}_{L} ,$$ for any $p \in Per(f).$

By the De La Llave result in \cite{LLAVE},  we have $f$ and $L$ are $C^{r -\varepsilon}, $ conjugated for some $\varepsilon > 0. $
\end{proof}

\section{Three dimensional case}

Let us first state the following lemma.

\begin{lemma}\label{constant}
Let $f: M \rightarrow M  $ be a $C^2-$Anosov diffeomorphism with constant periodic data. Then $f$ is conservative and the absolutely continuous measure is the maximal entropy measure.
\end{lemma}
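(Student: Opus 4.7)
The plan is to use Liv\v{s}ic's cohomological theorem together with a symmetric application of the SRB--Ledrappier--Young theory to $f$ and $f^{-1}$. Specifically, I want to show that the constant periodic data forces $|\det Df^{n_p}(p)| = 1$ on every periodic orbit, which by Bowen's theorem yields the absolutely continuous invariant measure, and then identify this measure with the measure of maximal entropy via the resulting Ruelle equality.

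Let $\Lambda^+$ and $\Lambda^-$ denote the common per-iterate sums of positive Lyapunov exponents and of absolute values of negative Lyapunov exponents along any periodic orbit---both well defined by hypothesis, since conjugate derivatives have the same eigenvalue structure. The H\"older function $\phi^u(x) := \log|\det(Df(x)|_{E^u_f(x)})|$ has Birkhoff sum equal to $n_p\Lambda^+$ on every periodic orbit of period $n_p$. Liv\v{s}ic's theorem for transitive Anosov systems then gives that $\phi^u$ is cohomologous to the constant $\Lambda^+$ via a H\"older transfer function, so $\int \phi^u \, d\mu = \Lambda^+$ for every $f$-invariant ergodic $\mu$; that is, the sum of positive Lyapunov exponents of every invariant measure coincides with $\Lambda^+$. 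The analogous argument applied to $-\log|\det(Df|_{E^s_f})|$ yields the $\Lambda^-$ version.

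Now Ruelle's inequality combined with Ledrappier--Young gives $h_\mu(f) \leq \Lambda^+$ for every invariant $\mu$, with equality for the SRB measure $\nu$ of $f$. By the variational principle, $h_{top}(f) = h_\nu(f) = \Lambda^+$. Repeating the argument for $f^{-1}$---whose positive Lyapunov exponents reproduce $\Lambda^-$---gives $h_{top}(f^{-1}) = \Lambda^-$, and since topological entropy is invariant under time reversal, $\Lambda^+ = \Lambda^-$. Consequently
\[ \log|\det Df^{n_p}(p)| = \log|\det(Df^{n_p}|_{E^u_f})(p)| + \log|\det(Df^{n_p}|_{E^s_f})(p)| = n_p(\Lambda^+ - \Lambda^-) = 0, \]
i.e., $|\det Df^{n_p}(p)| = 1$ on every periodic orbit. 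Bowen's theorem stated above then produces a unique $f$-invariant probability $\mu$ absolutely continuous with respect to Lebesgue, with positive H\"older density. Since local product structure forces $\mu$ to have absolutely continuous conditionals on unstable leaves, $\mu$ is SRB, and $h_\mu(f) = \Lambda^+ = h_{top}(f)$, identifying $\mu$ as the measure of maximal entropy.

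The main obstacle is the Liv\v{s}ic step, which converts ``constant along periodic orbits'' into ``constant against every invariant measure'' and is precisely what closes the entropy--Lyapunov chain. The secondary conceptual move is the symmetric use of $f^{-1}$ to extract $\Lambda^+ = \Lambda^-$; without it, one controls only the positive Lyapunov sum and cannot conclude volume preservation, hence cannot invoke Bowen's theorem.
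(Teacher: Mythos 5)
Your proof is correct and follows essentially the same route as the paper: the constant exponent sums on periodic orbits are propagated to all invariant measures, Ruelle's inequality plus Ledrappier--Young and the variational principle identify the SRB measure as the measure of maximal entropy, the symmetric argument for $f^{-1}$ yields $\Lambda^{+}=\Lambda^{-}$ (i.e.\ $\Lambda^s+\Lambda^u=0$ on periodic orbits), and Bowen's theorem closes the argument. The only difference is that you invoke Liv\v{s}ic's theorem where the paper uses the Anosov closing lemma to pass from periodic data to arbitrary invariant measures; both are standard and adequate for this step.
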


\begin{proof}
Let $x \in M$ be a regular point, define $\Lambda^s(x)$ be the sum of negative Lyapunov exponents of the point $x$ and  $\Lambda^u(x),$  the sum of positive Lyapunov exponents of the point $x.$
Since we have constant periodic data, for any $p,q \in Per(f),$ holds $\Lambda^{\ast}(p) = \Lambda^{\ast}(q), \ast \in \{s,u\} .$ We call $\Lambda^{\ast} $ the value $\Lambda^{\ast}(p), \ast \in \{s,u\},$ where $p$ is a periodic point of $f.$

Let $\mu $ be an $f$-invariant probability measure. Obviously $\mu$ is a hyperbolic measure for $f.$ So, using Anosov Closing lemma, for $\mu$ a.e. $x \in M$ we have
$$ \Lambda^{\ast}(x) =  \Lambda^{\ast}, \ast \in \{s,u\}. $$

By Ruelle formula, we have:

$$h_{\mu}(f) \leq \Lambda^u ,$$
for any $\mu$ an $f-$invariant probability measure.

Let $\mu_f$ be a SRB measure of $f.$ So, for $\mu_f$ we also have $\Lambda^{\ast}(x) = \Lambda^{\ast},$ for $\mu_f$ a.e. $x \in M.$ Using the SRB property of $\mu_f,$ we get

$$h_{\mu_f}(f) = \int_M \Lambda^u(x)d\mu_f = \Lambda^u,$$
by variational principle $h_{\mu_f}(f) = h_{top}(f) = \Lambda^u.$ So $\mu_f$ is the maximal entropy measure of $f.$ Analogously, taking $f^{-1} ,$ we conclude that $\mu_{f^{-1}}$ is also maximal entropy measure of $f,$ so $\mu_f = \mu_{f^{-1}}$ and
$$ -\Lambda^s= h_{\mu_{f^{-1}}}=h_{\mu_f}(f) = h_{top}(f) = \Lambda^u,$$
then
$$\Lambda^s + \Lambda^u = 0,$$
by Bowen result in \cite{Bo75}, we conclude that $f$ is conservative and $\mu_f$ is the absolutely continuous measure for $f.$
\end{proof}

The conservative case of Theorem \ref{teo3} is proved in \cite{MT2}.

\begin{lemma}[Corollary 3.3 of \cite{MT2}]\label{micena}
  Let $f: \mathbb{T}^3 \rightarrow \mathbb{T}^3 $ be a $C^r, r \geq 2,$ volume preserving Anosov diffeomorphism such that $T\mathbb{T}^3 =  E^{s}_f \oplus E^{wu}_f \oplus E^{su}_f.$ Suppose that there are constants $\Lambda^{\sigma}_f,  \sigma \in \{s, wu, su\},$ such that  for any $p \in Per(f)$ we have $\lambda^{\sigma}_f(p) = \Lambda^{\sigma}_f, \sigma \in \{s, wu, su\}.$ Then $f$ is $C^1$ conjugated with its linearization $L.$
\end{lemma}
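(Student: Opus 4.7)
The plan is to show that $f$ and its linearization $L$ have identical periodic data at every corresponding pair of periodic points, and then invoke the Gogolev--Guysinsky theorem recalled above to upgrade the Franks topological conjugacy to the desired regularity.

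First, I would apply Lemma \ref{constant}: since $f$ is $C^2$ Anosov with constant periodic data, the unique SRB measure of $f$ coincides with its measure of maximal entropy. Because $f$ is volume preserving by hypothesis, Lebesgue measure $m$ is that SRB measure, so $h_{top}(f)=h_m(f)$. The Pesin entropy formula (equivalently, Ledrappier--Young in the SRB direction, stated above) then gives $h_m(f)=\Lambda^{wu}_f+\Lambda^{su}_f$. Since topological entropy is a conjugacy invariant and $f$ is topologically conjugate to $L$ via Franks, we get
\[ \Lambda^{wu}_f + \Lambda^{su}_f \;=\; h_{top}(f)\;=\;h_{top}(L)\;=\;\lambda^{wu}_L + \lambda^{su}_L. \]

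Second, to separate the two positive exponents I would use the topological entropy along the strong unstable foliation $\mathcal{W}^{su}$, which is the technique highlighted in the abstract. The conjugacy between $f$ and $L$ maps $\mathcal{W}^{su}_f$ to $\mathcal{W}^{su}_L$, since on each side this is the unique one dimensional invariant foliation tangent to the fastest expanding subbundle. Hence the partial entropy $h(\cdot,\mathcal{W}^{su})$ is preserved by the conjugacy. Under constant periodic data and volume preservation the expansion along $\mathcal{W}^{su}_f$ is uniform in $m$-measure, and by the variational principle for partial entropy one identifies $h(f,\mathcal{W}^{su})=\Lambda^{su}_f$; similarly $h(L,\mathcal{W}^{su})=\lambda^{su}_L$. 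Therefore $\Lambda^{su}_f=\lambda^{su}_L$, and the displayed equation above forces $\Lambda^{wu}_f=\lambda^{wu}_L$. For the stable exponent I would either apply the same argument to $f^{-1}$ or observe that $f$ and $L$ both preserve volume, so
\[ \Lambda^s_f+\Lambda^{wu}_f+\Lambda^{su}_f \;=\;0\;=\;\lambda^s_L+\lambda^{wu}_L+\lambda^{su}_L \]
forces $\Lambda^s_f=\lambda^s_L$. At this stage $f$ and $L$ have matching Lyapunov exponents at corresponding periodic orbits in each of the three bundles, and both admit the same partially hyperbolic decomposition of $T\mathbb{T}^3$, so the Gogolev--Guysinsky theorem stated in the preliminaries upgrades the Franks conjugacy to a $C^{1+\nu}$ diffeomorphism, which is in particular $C^1$.

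The main obstacle I expect is justifying the second step rigorously: namely (i) verifying that the topological conjugacy really sends strong unstable leaves to strong unstable leaves rather than just sending the full unstable leaves to each other, and (ii) identifying the partial entropy $h(f,\mathcal{W}^{su})$ with the single Lyapunov exponent $\Lambda^{su}_f$ in the conservative setting via a Ledrappier--Walters type variational principle along a foliation. Both facts are available in the volume preserving partially hyperbolic $\mathbb{T}^3$ literature but require machinery beyond the statements recalled in the preliminaries, which is presumably why the author quotes the result verbatim as Corollary 3.3 of \cite{MT2} rather than reproving it here.
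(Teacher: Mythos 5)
The paper never actually proves this lemma: it is imported verbatim as Corollary 3.3 of \cite{MT2}, with only the remark that the conservative proof is ``the same'' and that \cite{GoGu} upgrades the regularity to $C^{1+\varepsilon}$. So there is no internal argument to compare against, and your reconstruction must be judged on its own. Its overall architecture is sound and matches the toolkit this paper deploys in Section 6: total entropy pins down $\Lambda^{wu}_f+\Lambda^{su}_f=\lambda^{wu}_L+\lambda^{su}_L$ (Lemma \ref{constant} plus volume preservation gives $h_{top}(f)=h_m(f)=\Lambda^{wu}_f+\Lambda^{su}_f$, and $h_{top}$ is a conjugacy invariant); partial entropy along $\mathcal{W}^{su}$ splits the two unstable exponents; and $\Lambda^{s}_f=\lambda^{s}_L$ follows from $\int\log|\det Df|\,dm=0$. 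The identification $h_{top}(f,\mathcal{W}^{su})=\Lambda^{su}_f$ also goes through with ingredients already in the paper: $\log\|Df|E^{su}_f\|$ is continuous, constant periodic data plus the Anosov closing lemma force its integral to equal $\Lambda^{su}_f$ for every invariant measure, Cao's Lemma \ref{lemmauniform} gives uniform convergence of the Birkhoff averages, and Theorem \ref{teoH} converts the resulting volume growth into partial entropy; note this part does not even need volume preservation.

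The genuine gap is your justification that $h(\mathcal{W}^{su}_f)=\mathcal{W}^{su}_L$. The uniqueness of the invariant foliation tangent to the fastest expanding subbundle says nothing about $h(\mathcal{W}^{su}_f)$: that object is merely an $L$-invariant \emph{topological} foliation of the unstable leaves of $L$, with no a priori tangency to any subbundle, and $L$ admits many such foliations (conjugate $\mathcal{W}^{su}_L$ by any homeomorphism commuting with $L$). In the higher-dimensional part of the paper this correspondence of strong foliations is obtained from \cite{FPS}, but that requires $f$ to be $C^1$-close to $L$, which is not assumed here; the global $\mathbb{T}^3$ statement needs instead the quasi-isometry/global product structure of strong unstable leaves in the universal cover (Brin--Burago--Ivanov, Hammerlindl, Potrie): lifted leaves of $\mathcal{W}^{su}_f$ stay within uniformly bounded Hausdorff distance of translates of $E^{su}_L$, the lift of $h$ is at bounded distance from the identity, and applying $L^{-n}$ collapses the weak-unstable displacement of $h(\mathcal{W}^{su}_f(x))$, forcing it into $W^{su}_L(h(x))$. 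Two smaller points you assert without justification also need a word each: the partial entropy must be computable with the ambient metric for it to be invariant under the (merely continuous) conjugacy once the foliations correspond; and the final appeal to Gogolev--Guysinsky requires that $L$ itself carries the three-way splitting, i.e.\ has real and distinct unstable eigenvalues, which is implicit in the statement but not automatic from the hypothesis on $f$.
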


It is important to note that Lemma \ref{micena} holds in conservative context and the proof is the same. In fact the conjugacy above is $C^{1+ \varepsilon},$ for some $\varepsilon > 0,$ because \cite{GoGu}.

Then  Theorem \ref{teo3}, follows from the above previous lemmas. In fact, since in Theorem \ref{teo3} we are supposing constant periodic data, by Lemma \ref{constant} we have $f$ is conservative. So we can apply Lemma \ref{micena} to conclude that the conjugacy is $C^{1+ \varepsilon},$ for some $\varepsilon > 0.$

\begin{corollary} Let $f$ be as in Theorem \ref{teo2}( or Theorem \ref{teo3}). If every $x$ is regular, then  $f$ is $C^{1+\varepsilon} $ conjugated with its linearization $L,$ for some $\varepsilon > 0.$
\end{corollary}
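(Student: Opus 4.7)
The plan is to reduce the hypothesis that every point of $\mathbb{T}^d$ is (Oseledets) regular to the constant periodic data hypothesis of Theorem \ref{teo2} (respectively Theorem \ref{teo3}), and then invoke those theorems directly. I would argue by contrapositive: if for some invariant subbundle $E^{\ast}$ (with $\ast \in \{s,u\}$ in the two-dimensional case, or $\ast \in \{s,wu,su\}$ in the three-dimensional case) there exist periodic points $p,q$ with $\lambda^{\ast}_f(p) \neq \lambda^{\ast}_f(q)$, then I will explicitly produce a non-regular point, mimicking the shadowing/coding argument used in the corollary that follows Theorem \ref{teo1} in the expanding case.

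The construction would use Bowen's specification property, available because every Anosov diffeomorphism on $\mathbb{T}^d$ is transitive. Fix a small $\eta>0$ and let $N=N(\eta)$ be the associated relaxation time. I would build inductively lengths $n_1 \ll m_1 \ll n_2 \ll m_2 \ll \cdots$ and apply specification to obtain a single point $x$ whose orbit $\eta$-shadows, in alternation, the orbit of $p$ for $n_k$ iterates and then the orbit of $q$ for $m_k$ iterates, with a gap of at most $N$ between consecutive blocks. If the block lengths grow rapidly enough that $n_{k+1}+m_{k+1}$ dominates the cumulative length of all previous blocks, then the finite Birkhoff averages
\[
\frac{1}{n}\log \|Df^{n}|_{E^{\ast}}(x)\|
\]
oscillate along appropriate subsequences of $n$ between values as close as desired to $\lambda^{\ast}_f(p)$ and $\lambda^{\ast}_f(q)$. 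Here I rely on the continuity of the (partially) hyperbolic splitting together with the uniform continuity of the function $v \mapsto \log \|Df|_{E^{\ast}}(v)\|$ on $\mathbb{T}^d$, which guarantees that $\eta$-shadowing of a periodic orbit forces the corresponding Birkhoff sums along the shadowing segment to be within $o(n_k)$ of the exact sums on the periodic orbit. Consequently the Lyapunov exponent of $x$ along $E^{\ast}$ does not exist, so $x$ is not regular, contradicting the hypothesis.

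This forces the periodic data to be constant in every invariant subbundle, at which point Theorem \ref{teo2} (respectively Theorem \ref{teo3}) delivers a $C^{r-\varepsilon}$ (respectively $C^{1+\varepsilon}$) conjugacy of $f$ with its linearization $L$; since $r \geq 2$, both conclusions yield $C^{1+\varepsilon}$ regularity, as claimed. The main obstacle, and the only step requiring genuine care, is the quantitative control of the alternating shadowing: one must choose $\eta$ small and then tune the block lengths so that the Birkhoff averages truly return infinitely often to values close to the distinct target exponents, and one must do this simultaneously for every pertinent subbundle $E^{\ast}$ in the three-dimensional case. This is standard in the Anosov setting thanks to the continuity of each invariant subbundle, but it is precisely the step where the hypothesis that \emph{every} point is regular is being exploited.
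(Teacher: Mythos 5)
Your proposal is correct and follows essentially the same route as the paper: the paper's proof also argues that non-constant periodic data plus the specification property yields a point $z$ whose Birkhoff averages of $\log\|Df|_{E^{\ast}}\|$ oscillate along two subsequences between values near $\lambda^{\ast}_f(p)$ and $\lambda^{\ast}_f(q)$, so $z$ is not regular, and then invokes Theorem \ref{teo2} (resp.\ \ref{teo3}). Your write-up actually supplies more of the quantitative shadowing details than the paper, which states the construction only in outline.
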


\begin{proof} It is as consequence of Specification Property of Anosov diffeomorphisms. In fact, if every point is regular, then the periodic data of $f$ is constant.
Suppose that $p, q \in Per(f)$ and for some $\ast$ we have $\lambda_f^{\ast}(p) < \lambda_f^{\ast}(q), $ using specification it is possible to find a  point $z$ and subsequences $\{n_k\}_{k=1}^{+\infty}$ and $\{m_k\}_{k=1}^{+\infty},$ such that for a given small $\varepsilon$ we have

$$\frac{1}{n_k} \log(Df^{n_k}(z)| E^{\ast}_f ) < \lambda_f^{\ast}(p) + \varepsilon < \lambda_f^{\ast}(q) - \varepsilon <
 \frac{1}{m_k} \log(Df^{m_k}(z)| E^{\ast}_f ), $$
for all $k \geq 1.$ So $z$ can not be a regular point.

\end{proof}

\section{Higher dimensions}

In this section we will use results about entropy a diffeomorphism $f$ along an expanding and $f-$invariant foliation from \cite{Hua} and the tools presented in \cite{Ca} and citation therin.

\begin{proposition}\label{propuniform} Let $L: \mathbb{T}^d \rightarrow \mathbb{T}^d$ be an Anosov linear automorphism, diagonalizable and irreducible over $\mathbb{Q}.$ If $f$ is a $C^2-$Anosov diffeomorphism sufficiently $C^1-$close to $L$ with constant periodic data, then every point $x \in \mathbb{T}^d$ is regular and $\lambda^{\ast}_f(x) = \lambda^{\ast}_f(p),$ where $p $ is a given periodic point of $f.$ Moreover the convergence as in the definition of Lyapunov exponent is uniform, for all possible invariant direction.
\end{proposition}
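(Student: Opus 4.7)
The plan is to produce the fine one-dimensional dominated splitting inherited by $f$ from $L$ under $C^1$-perturbation, and then to apply Livshits' cohomological theorem to each scalar cocycle $\log\|Df|_{E^\ast_f}\|$, upgrading constancy on periodic orbits to uniform pointwise constancy on $\mathbb{T}^d$.

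First I would extract the splitting. The linearization $L$, being diagonalizable over $\mathbb{R}$ with distinct eigenvalues (with distinct moduli, which is the intended reading of the algebraic hypotheses on $L$), carries a strictly dominated splitting
$$T\mathbb{T}^d \,=\, E^s_{L,1} \oplus \cdots \oplus E^s_{L,k} \oplus E^u_{L,1} \oplus \cdots \oplus E^u_{L,n}$$
into one-dimensional real eigenspaces. By the $C^1$-robustness of strictly dominated splittings, any $f$ sufficiently $C^1$-close to $L$ inherits a unique $Df$-invariant splitting $T\mathbb{T}^d = \bigoplus_\ast E^\ast_f$ of the same dimensions. Because $f$ is $C^2$ and the splitting is strictly dominated, each sub-bundle $E^\ast_f$ is Hölder continuous on $\mathbb{T}^d$.

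Second, for each label $\ast$ I would fix a continuous unit vector field $v^\ast(x) \in E^\ast_f(x)$ and form the scalar observable
$$\varphi^\ast(x) := \log\|Df(x)\,v^\ast(x)\|.$$
Since $E^\ast_f$ and $Df$ are Hölder, $\varphi^\ast$ is Hölder on $\mathbb{T}^d$. Because $E^\ast_f$ is one-dimensional, for every periodic point $p$ of period $\tau$ one has
$$\frac{1}{\tau}\sum_{j=0}^{\tau-1}\varphi^\ast(f^j(p)) = \lambda^\ast_f(p),$$
and by the constant-periodic-data hypothesis this common value is a constant $\Lambda^\ast$ independent of $p \in \Per(f)$. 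Since $f$ is a transitive $C^{1+\alpha}$ Anosov diffeomorphism (being conjugate to $L$) and $\varphi^\ast - \Lambda^\ast$ is a Hölder observable whose Birkhoff sum vanishes on every periodic orbit, Livshits' theorem supplies a Hölder function $u^\ast : \mathbb{T}^d \to \mathbb{R}$ with
$$\varphi^\ast(x) - \Lambda^\ast \,=\, u^\ast(f(x)) - u^\ast(x), \qquad x \in \mathbb{T}^d.$$
Telescoping then gives
$$\frac{1}{n}\sum_{j=0}^{n-1}\varphi^\ast(f^j(x)) \,=\, \Lambda^\ast + \frac{u^\ast(f^n(x)) - u^\ast(x)}{n},$$
which converges to $\Lambda^\ast$ uniformly in $x$ by boundedness of $u^\ast$. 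Consequently every $x \in \mathbb{T}^d$ is regular with $\lambda^\ast_f(x) = \Lambda^\ast = \lambda^\ast_f(p)$ in every invariant direction $\ast$, and the convergence is uniform, as required.

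The main obstacle I anticipate is entirely in the first step: I need to convert the algebraic hypotheses on $L$ into a genuinely strictly dominated one-dimensional splitting for $f$. This hinges on the eigenvalue \emph{moduli} of $L$ being pairwise distinct (so that the splitting of $T\mathbb{T}^d$ is strictly dominated and therefore $C^1$-robust), together with the standard but nontrivial Hölder regularity of strictly dominated sub-bundles of a $C^2$ diffeomorphism. If two eigenvalues of $L$ shared the same modulus, their eigenspace sum would persist for $f$ only as a single higher-dimensional Hölder sub-bundle, and separating the individual exponents uniformly would require a more refined argument (for instance, via invariant sections of the associated Grassmannian cocycle or Oseledets sub-bundles together with nonstationary normal-form coordinates) before Livshits could be applied.
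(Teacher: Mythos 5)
Your argument is correct, but it takes a genuinely different route from the paper's. The paper works on the measure-theoretic side: it fixes the continuous observable $\phi=\log\|Df|_{E^{\ast}_f}\|$, uses the Anosov closing lemma (approximation of hyperbolic ergodic measures by periodic orbits) to conclude that $\int\phi\,d\mu=\lambda^{\ast}_f(p)$ for \emph{every} ergodic invariant measure $\mu$, and then invokes Cao's lemma --- if $\int\phi\,d\mu<\lambda$ for all invariant $\mu$, then the Birkhoff averages of $\phi$ are eventually $<\lambda$ uniformly in $x$ --- applied with $\lambda=\lambda^{\ast}_f(p)\pm\varepsilon$ to squeeze out uniform convergence. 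You instead solve the cohomological equation: constant periodic data makes the periodic obstructions of the H\"{o}lder observable $\varphi^{\ast}-\Lambda^{\ast}$ vanish, Livshits produces a bounded transfer function $u^{\ast}$, and telescoping gives uniform convergence together with the stronger bounded-deviation estimate $\bigl|\sum_{j=0}^{n-1}\varphi^{\ast}(f^j(x))-n\Lambda^{\ast}\bigr|\leq 2\|u^{\ast}\|_{\infty}$, which the paper's compactness argument does not yield. The trade-off is that Livshits requires H\"{o}lder (not merely continuous) regularity of the sub-bundles $E^{\ast}_f$ --- standard for a dominated splitting of a $C^{1+\alpha}$ diffeomorphism, but an extra ingredient --- and transitivity, whereas Cao's lemma needs only continuity of $\varphi^{\ast}$; in exchange, your route entirely bypasses the closing-lemma step relating exponents of arbitrary ergodic measures to periodic data. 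Both arguments share the same tacit reduction, namely that each $E^{\ast}_f$ is one-dimensional so that Birkhoff sums of the observable telescope exactly to $\log\|Df^n|_{E^{\ast}_f}\|$, and you are right to flag that this rests on the eigenvalue moduli of $L$ being pairwise distinct, which is the intended (if not literally stated) reading of the hypotheses here and in Theorem \ref{teo4}.
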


We prove the proposition after some lemmas, in the same lines as in \cite{Ca}.

\begin{lemma} Let $\mathcal{M}$ be the space of $f-$invariant measures, $\phi$ be a continuous function on $M.$ If $\int \phi d\mu < \lambda, \; \forall \mu \in \mathcal{M}, $ then for every $x \in M,$ there exists $n(x)$ such that
$$\frac{1}{n(x)} \sum_{i = 0}^{n(x) - 1} \phi(f^i(x)) < \lambda. $$
\end{lemma}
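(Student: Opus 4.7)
My plan is to argue by contradiction using a standard Krylov--Bogolyubov style construction of an invariant measure from empirical averages.

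Suppose the conclusion fails: then there is some $x \in M$ such that for every $n \geq 1$,
\[
\frac{1}{n}\sum_{i=0}^{n-1}\phi(f^i(x)) \geq \lambda.
\]
The natural object to introduce is the sequence of empirical measures
\[
\mu_n := \frac{1}{n}\sum_{i=0}^{n-1}\delta_{f^i(x)},
\]
which are Borel probability measures on $M$, and by construction satisfy $\int \phi\, d\mu_n \geq \lambda$ for every $n$.

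Next, I would use compactness of $M$, which implies that the space of Borel probability measures on $M$ is compact in the weak-$\ast$ topology. Extract a subsequence $\mu_{n_k}$ converging weak-$\ast$ to some probability measure $\mu$. The standard Krylov--Bogolyubov argument shows $\mu \in \mathcal{M}$: for any continuous $\psi$,
\[
\int \psi \circ f\, d\mu_n - \int \psi\, d\mu_n = \frac{1}{n}\bigl(\psi(f^n(x)) - \psi(x)\bigr) \to 0,
\]
so passing to the limit along $n_k$ and using weak-$\ast$ convergence gives $\int \psi\circ f\, d\mu = \int \psi\, d\mu$, hence $f_\ast \mu = \mu$.

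Finally, since $\phi$ is continuous, weak-$\ast$ convergence yields
\[
\int \phi\, d\mu = \lim_{k\to\infty} \int \phi\, d\mu_{n_k} \geq \lambda,
\]
contradicting the hypothesis $\int \phi\, d\nu < \lambda$ for all $\nu \in \mathcal{M}$. Hence some $n(x)$ must satisfy the claimed strict inequality. There is no real obstacle here; the only subtlety is that the hypothesis gives a strict inequality for every invariant measure (without a uniform gap), which is precisely why the argument has to go through weak-$\ast$ compactness and produce an actual invariant measure to contradict, rather than trying to obtain a uniform bound from the start.
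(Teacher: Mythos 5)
Your proof is correct and complete; the paper itself gives no argument for this lemma, deferring entirely to the reference [Ca], and your Krylov--Bogolyubov contradiction argument (empirical measures, weak-$\ast$ compactness, invariance of the limit, and passage to the limit in $\int\phi\,d\mu_{n_k}\geq\lambda$) is precisely the standard proof used there. No gaps.
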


\begin{proof}
See \cite{Ca}.
\end{proof}

\begin{lemma}\label{lemmauniform} Let $\mathcal{M}$ be the space of $f-$invariant measures, $\phi$ be a continuous function on $M.$ If $\int \phi d\mu < \lambda, \; \forall \mu \in \mathcal{M}, $ then  there exists $N$ such that for all $n \geq N,$ we have
$$\frac{1}{n} \sum_{i = 0}^{n - 1} \phi(f^i(x)) < \lambda,$$
 for all $x \in M.$
\end{lemma}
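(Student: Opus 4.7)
The plan is to argue by contradiction, using weak-$\ast$ compactness of the space of Borel probability measures on the compact manifold $M$ together with the Krylov–Bogolyubov construction of invariant measures from Birkhoff averages.

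Suppose the conclusion fails. Then for every $N$ there exist $n_N \geq N$ and $x_N \in M$ with
\begin{equation*}
\frac{1}{n_N} \sum_{i=0}^{n_N - 1} \phi(f^i(x_N)) \;\geq\; \lambda.
\end{equation*}
First I would form the empirical measures
\begin{equation*}
\mu_N \;=\; \frac{1}{n_N}\sum_{i=0}^{n_N-1}\delta_{f^i(x_N)},
\end{equation*}
so that $\int \phi \, d\mu_N \geq \lambda$ for every $N$. Since $M$ is compact, the space of Borel probability measures on $M$ is weak-$\ast$ compact, so we may pass to a subsequence (still denoted $\mu_N$) converging in the weak-$\ast$ topology to some probability measure $\mu$.

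Next I would verify that $\mu$ is $f$-invariant. This is the standard telescoping estimate: for any continuous $\psi \in C(M)$,
\begin{equation*}
\left| \int \psi\circ f \, d\mu_N - \int \psi \, d\mu_N \right|
\;=\; \frac{1}{n_N}\bigl|\psi(f^{n_N}(x_N)) - \psi(x_N)\bigr|
\;\leq\; \frac{2\|\psi\|_\infty}{n_N} \;\longrightarrow\; 0.
\end{equation*}
Passing to the limit, and using that both $\psi$ and $\psi\circ f$ are continuous so the weak-$\ast$ limits commute with integration, we obtain $\int \psi \circ f \, d\mu = \int \psi \, d\mu$ for every $\psi \in C(M)$, hence $\mu \in \mathcal{M}$.

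Finally, since $\phi$ is continuous, weak-$\ast$ convergence gives $\int \phi\, d\mu = \lim_N \int \phi\, d\mu_N \geq \lambda$, contradicting the hypothesis $\int \phi\, d\mu < \lambda$ for every $\mu \in \mathcal{M}$. The only subtle point is the invariance of the limit $\mu$, but the telescoping bound above handles it cleanly; all other steps are immediate from weak-$\ast$ compactness and continuity of $\phi$. Note that this argument supersedes the previous lemma (which only produced some $n(x)$ pointwise): the compactness step promotes the pointwise conclusion to a uniform one in a single stroke, without needing to patch together the $n(x)$'s by continuity.
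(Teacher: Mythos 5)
Your argument is correct: the contradiction via empirical measures, weak-$\ast$ compactness of the probability measures on the compact manifold $M$, and the telescoping estimate showing the limit measure is $f$-invariant is exactly the standard proof of this statement, and every step (in particular the use of $n_N\geq N\to\infty$ in the telescoping bound and the continuity of $\phi$ to pass to the limit in $\int\phi\,d\mu_N\geq\lambda$) is justified. The paper gives no proof of its own, deferring to \cite{Ca}, and the argument there is essentially the one you present, so your proposal matches the intended proof.
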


\begin{proof}
See \cite{Ca}.
\end{proof}
In the previous lemmas if we replace  $\int \phi d\mu < \lambda$ by $\int \phi d\mu > \lambda,$ we can get analogous statements.
Let us to prove  Proposition \ref{propuniform}

\begin{proof}[of the Proposition \ref{propuniform}]
We go to use $\phi = \log |Df|E^{\ast}_f (\cdot)|.$ Since we have ergodic decomposition, we also go to consider $\mu$ an ergodic and $f-$invariant measure. The diffeomorphism $f$ is Anosov, then every $\nu \in \mathcal{M}$ is a hyperbolic measure.   Fix $p$ a periodic point of $f.$ Since $f$ has constant periodic data, by Anosov Closing Lemma, we have $\lambda^{\ast}_f(x) = \lambda^{\ast}_f(p), $ for $\mu$ a.e. $x \in  \mathbb{T}^d.$  Now, by Ergodic Birkhoff Theorem, we have

$$\lambda^{\ast}_f(p) = \int \log |Df|E^{\ast}_f (x)|d\mu(x), $$
for any $\mu \in \mathcal{M},$  ergodic measure.
Consider $\varepsilon > 0,$  we can apply the previous lemmas with number $\lambda^{\ast}_f(p) + \varepsilon. $ So we obtain that there is a integer $N_1 > 0,$ such that
$$\frac{1}{n}\sum_{i=1}^{n-1} \phi(f^i(x)) < \lambda^{\ast}_f(p) + \varepsilon, $$ for any $x \in \mathbb{T}^d$ and $n \geq N_1.$
So we have
\begin{equation}\label{menor}
\frac{1}{n} \log(Df|E^{\ast}_f(x)) < \lambda^{\ast}_f(p) + \varepsilon, \, \forall x \in \mathbb{T}^d \; \mbox{and}\;  n \geq N_1.
\end{equation}

 Using the previous lemmas in their versions $`<`,$ we can find $N_2 > 0,$ integer such that
\begin{equation}\label{maior}
\frac{1}{n} \log(Df|E^{\ast}_f(x)) > \lambda^{\ast}_f(p) - \varepsilon, \, \forall x \in \mathbb{T}^d \; \mbox{and}\;  n \geq N_2.
\end{equation}

Taking $N = N_1 + N_2$ and using the equations $(\ref{menor})$ and $(\ref{maior}),$ we conclude that $\frac{1}{n} \log(Df|E^{\ast}_f(x))$  converges uniformly to $\lambda^{\ast}_f(p).$

\end{proof}

In Theorem \ref{teo4} we can suppose that the eigenvalues of $L$ satisfying $0< |\beta_1^s| < \ldots < |\beta_k^s| < 1 <  |\beta_1^u| < \ldots < |\beta_n^u|. $ The Lyapunov exponents of $L,$ are
$\lambda^s_i(L) = \log(|\beta_i^s|), i =1, \ldots, k$ and $\lambda^u_i(L) = \log(|\beta_i^u|), i =1, \ldots, n.$ For $f$ we denote by $\lambda^u_{i}(x,f)$ the Lyapunov exponent of $f$ at $x$ in the direction $E^{u,f}_i, i = 1, \ldots, n$ and by  $\lambda^s_{i}(x,f)$ the Lyapunov exponent of $f$ at $x$ in the direction $E^{s,f}_i, i = 1, \ldots, k,$ in the cases that Lyapunov exponents are defined.

Let us introduce a notation $E^{s,L}_{(1, i)} = E^s_1 \oplus \ldots \oplus E^s_i, i=1, \ldots, k$ and  $E^{u,L}_{(1, i)} = E^u_1 \oplus \ldots \oplus E^u_i, i=1, \ldots, n.$ If $j > i,$ we denote $E^{s,L}_{(i, j)} =  E^s_i \oplus \ldots \oplus E^s_j $ and $E^{u,L}_{(i, j)} =  E^u_i \oplus \ldots \oplus E^u_j. $

In the setting of Theorem \ref{teo4}, it is known by Pesin \cite{pesin2004lectures}, that if $f$ is $C^1-$close to $L,$ then $T\mathbb{T}^d$ admits a similar splitting
$E^s_f = E^{s,f}_1 \oplus E^{s,f}_2 \oplus \ldots \oplus E^{s,f}_k $ and  $E^u_L = E^{u,f}_1 \oplus E^{u,f}_2 \oplus \ldots \oplus E^{u,f}_n .$ As before, define $E^{u,f}_{(1,i)} = E^{u,f}_1 \oplus  \ldots \oplus E^{u,f}_i $ and $E^{s,f}_{(1,i)} = E^{s,f}_1 \oplus  \ldots \oplus E^{s,f}_i ,$ analogously, for $i > j,$ we define $E^{s,f}_{(i, j)}$  and $E^{u,f}_{(i, j)}.$

By continuity of each subbundle,  we can take the decomposition $E^s_f \oplus E^{u,f}_{(1,i)} \oplus E^{u,f}_{(i+1, n)}$ such that it is a uniform partially hyperbolic splitting.

Moreover, by \cite{B}, each $E^{u,f}_{(1,i)} = E^{u,f}_1 \oplus  \ldots \oplus E^{u,f}_i, $ is integrable to an invariant foliation $W^{u,f}_{(1, i)},$ with $i =1, \ldots, n.$ An analogous construction holds for stable directions. By \cite{FPS}, since $f$ is $C^1-$close to $L,$ the conjugacy $h$ between $L$ and $f$ is such that $h(W^{u,L}_{(1,i)}) = W^{u,f}_{(1,i)}, $ the same for stable foliations.

In \cite{Hua} the authors lead with a notion of topological entropy $h_{top}(f, \mathcal{W} )$ of an invariant expanding foliation $\mathcal{W}$ of a diffeomorphism $f. $ They establish variational principle in this sense and relation between $h_{top}(f, \mathcal{W} )$ and volume growth of $\mathcal{W}. $

Here $W(x)$ will denote the leaf of $\mathcal{W}$ by $x.$ Given  $\delta  > 0,$  we denote by $W(x, \delta)$ the $\delta-$ball centered in $x$ on $W(x),$ with the induced riemannian distance, that we will denote by $d_{W}.$

Given $x \in M, $ $\varepsilon > 0, $ $\delta > 0$ and $n \geq 1$ a integer number, let $N_{W}(f, \varepsilon, n, x, \delta)$ be the maximal number of points in $\overline{W(x, \delta)}$ such that $\displaystyle\max_{j =0 \ldots, n-1} d_{W}(f^j(x), f^j(y)) \leq \varepsilon.$

\begin{definition} The unstable entropy of $f$ on $M,$ with respect to the expanding foliation $\mathcal{W}$ is given by
$$h_{top}(f, \mathcal{W} ) = \lim_{\delta \rightarrow 0} \sup_{x \in M} h^{\mathcal{W}}_{top}(f, \overline{W(x, \delta)}), $$
where
$$h^{\mathcal{W}}_{top}(f, \overline{W(x, \delta)}) = \lim_{\varepsilon \rightarrow 0} \limsup_{n \rightarrow +\infty} \frac{1}{n} \log(N_{W}(f, \varepsilon, n, x, \delta)). $$
\end{definition}

Define $\mathcal{W}-$volume growth by
 $$\chi_{\mathcal{W}}(f) = \sup_{x \in M } \chi_{\mathcal{W}}(x, \delta), $$
where
$$ \chi_{\mathcal{W}}(x, \delta) = \limsup_{n\rightarrow +\infty} \frac{1}{n} \log(Vol(f^n(W(x, \delta)))).$$

Note that, since we are supposing $\mathcal{W}$ a expanding foliation, the above definition is independent of $\delta$ and the riemannian metric.

\begin{theorem}[Theorem C of \cite{Hua}]\label{teoH} With the notations above we have
$$h_{top}(f, \mathcal{W} ) = \chi_{\mathcal{W}}(f).$$
\end{theorem}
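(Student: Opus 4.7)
The plan is to establish $h_{top}(f,\mathcal{W}) = \chi_{\mathcal{W}}(f)$ by proving the two inequalities separately, following the classical Yomdin--Newhouse--Przytycki paradigm that relates topological entropy to volume growth, adapted here to the leafwise setting. The two key ingredients I would rely on are: (i) uniform expansion of $f$ along $\mathcal{W}$, which produces a constant $\lambda>1$ with $d_W(f(y),f(z)) \geq \lambda\, d_W(y,z)$ for nearby $y,z$ on the same leaf; and (ii) bounded distortion of the leafwise Jacobian of $f^n$ on small leafwise balls, valid because $f$ is $C^{1+\alpha}$ on the expanding foliation.

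For the upper bound $h_{top}(f,\mathcal{W}) \leq \chi_{\mathcal{W}}(f)$, fix $x$ and $\delta$, and let $\{y_1,\ldots,y_N\}$ be a maximal $(n,\varepsilon)$-separated set in $\overline{W(x,\delta)}$ realizing $N_W(f,\varepsilon,n,x,\delta)$. By the separation hypothesis the images $f^n(y_i)$ are pairwise $\varepsilon$-separated in $d_W$, so the leaf balls $W(f^n(y_i),\varepsilon/2)$ are pairwise disjoint inside the leaf $W(f^n(x))$. Each such ball has leafwise volume bounded below by a constant $c(\varepsilon,\dime\mathcal{W})>0$, and, after enlarging $\delta$ slightly to $\delta'$, all of them lie inside $f^n(W(x,\delta'))$. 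Thus
\[
N \cdot c(\varepsilon,\dime\mathcal{W}) \;\leq\; \mathrm{Vol}\bigl(f^n(W(x,\delta'))\bigr),
\]
and taking $\tfrac{1}{n}\log$, then $n\to\infty$, $\varepsilon \to 0$, and $\delta \to 0$ gives the desired inequality.

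For the reverse inequality $h_{top}(f,\mathcal{W}) \geq \chi_{\mathcal{W}}(f)$, I would dualize the argument using $(n,\varepsilon)$-spanning sets in $\overline{W(x,\delta)}$, whose cardinality is comparable to $N_W(f,\varepsilon/2,n,x,\delta)$. For each spanning point $z$, the leafwise Bowen ball $\{w\in \overline{W(x,\delta)} : d_W(f^j(w),f^j(z))\leq \varepsilon,\ 0\leq j\leq n-1\}$ maps under $f^n$ onto a subset of $W(f^n(z),\varepsilon)$, whose leafwise volume is bounded above by $C(\varepsilon,\dime\mathcal{W})$. Since these images cover $f^n(\overline{W(x,\delta)})$, one obtains
\[
\mathrm{Vol}\bigl(f^n(\overline{W(x,\delta)})\bigr) \;\leq\; C(\varepsilon,\dime\mathcal{W}) \cdot N_W(f,\varepsilon/2,n,x,\delta),
\]
and the asymptotic gives $\chi_{\mathcal{W}}(x,\delta) \leq h^{\mathcal{W}}_{top}(f,\overline{W(x,\delta)})$. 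Taking suprema over $x$ and then $\delta\to 0$ finishes this direction.

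The main technical obstacle is controlling the bounded distortion of $Df^n$ restricted to $\mathcal{W}$ uniformly in $n$ on leafwise balls of \emph{fixed} radius at time $n$: this is what guarantees that the constants $c(\varepsilon)$ and $C(\varepsilon)$ in the two volume estimates depend only on $\varepsilon$ and $\dime\mathcal{W}$, not on $n$ or on the base point. One handles this using the uniform expansion $\lambda>1$ together with the H\"older regularity of $\log\|Df|_{E_\mathcal{W}}\|$, summing a geometric series exactly as in the classical bounded distortion lemma proved earlier in the paper. A secondary subtlety is that $\chi_{\mathcal{W}}$ is defined with $\sup_x$ inside the $\limsup$ while $h_{top}(f,\mathcal{W})$ uses $\lim_{\delta\to 0}\sup_x$, so one must check that the constants in both inequalities are independent of $\delta$, which follows from the expanding nature of $\mathcal{W}$.
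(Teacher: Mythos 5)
The paper does not actually prove this statement: it is imported verbatim as Theorem C of Hu--Hua--Wu \cite{Hua}, so there is no internal argument to compare yours against. On its own merits, your two-inequality scheme is sound and is essentially the standard proof for an \emph{invariant, uniformly expanding} foliation. For the upper bound, separation at some time $k\le n-1$ propagates to separation at time $n$ because leafwise distances are non-decreasing under $f$ along an expanding foliation, so the disjoint-balls volume count works (with the radius enlargement $\delta'=\delta+\varepsilon/2$ justified by leafwise contraction of $f^{-n}$). For the lower bound, $f^{n}$ sends each leafwise Bowen ball into a leafwise ball of radius $\mathrm{Lip}(f|_{W})\,\varepsilon$ around $f^{n}(z)$, these images cover $f^{n}(\overline{W(x,\delta)})$, and $f^n$ is injective on the leaf, giving $\mathrm{Vol}(f^{n}(\overline{W(x,\delta)}))\le N\cdot C(\varepsilon)$. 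One correction of emphasis: the bounded distortion of the leafwise Jacobian, which you single out as the main technical obstacle, is not in fact used anywhere in your argument. The constants $c(\varepsilon)$ and $C(\varepsilon)$ are uniform two-sided bounds on the volume of leafwise balls of a \emph{fixed} radius, which follow from compactness of $M$ and continuity of the tangent distribution, not from distortion control along orbits; the reason the classical Yomdin--Newhouse difficulties (folding, loss of dimension) never arise here is precisely that the foliation is $f$-invariant and expanded, so $f^{n}$ restricted to a plaque is a diffeomorphism onto a subset of a single leaf. The two points genuinely worth making explicit are (i) that $N_{W}(f,\varepsilon,n,x,\delta)$ must be read as the maximal cardinality of an $(n,\varepsilon)$-separated set for the leafwise Bowen metric (the definition as printed in the text is garbled), and (ii) the independence of $\chi_{\mathcal{W}}(x,\delta)$ from $\delta$, which you correctly flag and which reconciles the $\sup_{x}$ inside versus outside the $\delta$-limit.
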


As a consequence of Proposition \ref{propuniform} and  Theorem \ref{teoH}, we have.

\begin{corollary} If $f$ as in Theorem \ref{teo4} and $W^{u,f}_{(1, i)}$ is the foliation tangent to $E^{u,f}_{(1, i)},$ then $h_{top}(f, W^u_{(1, i)}) = \displaystyle\sum_{j=1}^i \lambda^{u}_j(p, f).$
\end{corollary}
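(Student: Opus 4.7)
The plan is to apply Theorem \ref{teoH} to reduce the computation of $h_{top}(f,W^{u,f}_{(1,i)})$ to the volume growth $\chi_{W^{u,f}_{(1,i)}}(f)$, and then evaluate the latter using the uniform convergence of Lyapunov exponents guaranteed by Proposition \ref{propuniform}.

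First, I would note that since $L$ is diagonalizable over $\R$ with distinct eigenvalues, each bundle $E^{u,f}_j$ is one-dimensional whenever $f$ is $C^1$-close to $L$, and the continuous invariant splitting $E^{u,f}_{(1,i)} = E^{u,f}_1\oplus\cdots\oplus E^{u,f}_i$ has angles between consecutive summands uniformly bounded away from zero. Consequently, there is a constant $K\geq 1$ (depending only on the splitting, not on $n$) such that
\[
\frac{1}{K}\,\prod_{j=1}^{i}\bigl|Df^n(y)\bigm| E^{u,f}_j\bigr| \;\leq\; \bigl|\det Df^n(y)\bigm| E^{u,f}_{(1,i)}\bigr| \;\leq\; K\,\prod_{j=1}^{i}\bigl|Df^n(y)\bigm| E^{u,f}_j\bigr|,
\]
for every $y\in\mathbb{T}^d$ and every $n\geq 1$.

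Next, I would invoke Proposition \ref{propuniform}: for each $j$, the sequence $\tfrac{1}{n}\log|Df^n(y)|E^{u,f}_j|$ converges uniformly in $y$ to $\lambda^u_j(p,f)$. Summing over $j=1,\ldots,i$ and combining with the comparison above, I obtain that, for every $\varepsilon>0$, there exists $N=N(\varepsilon)$ such that for all $n\geq N$ and all $y\in\mathbb{T}^d$,
\[
\exp\!\Bigl(n\bigl(\textstyle\sum_{j=1}^{i}\lambda^u_j(p,f)-\varepsilon\bigr)\Bigr) \;\leq\; \bigl|\det Df^n(y)\bigm| E^{u,f}_{(1,i)}\bigr| \;\leq\; \exp\!\Bigl(n\bigl(\textstyle\sum_{j=1}^{i}\lambda^u_j(p,f)+\varepsilon\bigr)\Bigr).
\]

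Then I would use the change of variables formula along the expanding foliation,
\[
\vol\bigl(f^n(W^{u,f}_{(1,i)}(x,\delta))\bigr) \;=\; \int_{W^{u,f}_{(1,i)}(x,\delta)} \bigl|\det Df^n(y)\bigm| E^{u,f}_{(1,i)}\bigr|\,dy,
\]
and insert the uniform two-sided bounds. Since the leaf $W^{u,f}_{(1,i)}(x,\delta)$ has volume bounded between two positive constants independent of $x$, dividing by $n$ and letting $n\to\infty$ gives, independently of $x$ and $\delta$,
\[
\Bigl|\chi_{W^{u,f}_{(1,i)}}(x,\delta) - \textstyle\sum_{j=1}^{i}\lambda^u_j(p,f)\Bigr| \;\leq\; \varepsilon,
\]
and letting $\varepsilon\to 0$ yields $\chi_{W^{u,f}_{(1,i)}}(f) = \sum_{j=1}^{i}\lambda^u_j(p,f)$. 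Applying Theorem \ref{teoH} then gives the equality $h_{top}(f,W^{u,f}_{(1,i)})=\sum_{j=1}^{i}\lambda^u_j(p,f)$.

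The step I expect to require the most care is the passage from uniform convergence of the one-dimensional Jacobians to uniform convergence of the Jacobian along the composite sub-bundle $E^{u,f}_{(1,i)}$; the key input there is the uniform lower bound on the angles between the $E^{u,f}_j$, which is free because these bundles are continuous on the compact manifold $\T^d$ and linearly independent by the $C^1$-proximity with the diagonalizable $L$. Once this comparison is in place, everything else is a direct manipulation of the uniform estimates.
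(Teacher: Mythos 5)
Your proof is correct and follows essentially the same route as the paper: reduce to the leafwise volume growth via Theorem \ref{teoH} and evaluate it using the uniform convergence from Proposition \ref{propuniform}. You simply spell out details the paper leaves implicit (the change-of-variables integral over the leaf ball and the comparison between the Jacobian on $E^{u,f}_{(1,i)}$ and the product of one-dimensional Jacobians via uniformly bounded angles), which is a welcome tightening rather than a different argument.
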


\begin{proof} Fix $p$ a periodic point of $f.$ By Proposition \ref{propuniform}we have
$$\lim_{n \rightarrow + \infty}\frac{1}{n} \log(Vol(f^n((W^{u,f}_{(1, i)}(x, \delta)))) = \lim_{n \rightarrow +\infty}\frac{1}{n} \log(|\det(Df^n(x)|E^{u,f}_{(1,i)}(x))\cdot Vol(W^{u,f}_{(1, i)}(x, \delta)) |).$$
Using Proposition \ref{propuniform}, the right side of the above expression converges uniformly to $\sum_{j=1}^i \lambda^u_{j}(p,f).$ So, by Theorem C of \cite{Hua}, we have  $h_{top}(f, W^u_{(1, i)}) = \displaystyle\sum_{j=1}^i \lambda^{u}_j(p,f),$ as required.
\end{proof}

We are ready to prove Theorem \ref{teo4}.

\begin{proof}  Since $h(W^{u,L}_{(1, i)}) =  W^{u,f}_{(1, i)},$ we have $h_{top}(f, W^{u,f}_{(1, i)}) = h_{top}(L, W^{u,L}_{(1, i)})  .$ Now, consider $ \beta^s_i, i =1, \ldots, k,$ the eigenvalues of $L,$ we have $$0 < |\beta^s_1| <  |\beta^s_2| < \ldots < |\beta^s_k|< 1$$ and  $ \beta^u_i, i =1, \ldots, n,$ such that $$1 < |\beta^u_1| <  |\beta^u_2| < \ldots < |\beta^s_n| .$$
Let $p$ be a periodic point of $f.$  Since we have constant periodic data, so for any $i =1, \ldots, n$ we have
$$\lambda^u_1(p, f) + \ldots + \lambda^u_i(p, f) =  h_{top}(f, W^u_{(1, i)}) = h_{top}(L, W^u_{(1, i)}(L) ) = \lambda^u_1(L) + \ldots + \lambda^u_i(L), $$
 for any $i =1, \ldots, n.$
So, for $i = 1,$ we have
$$\lambda^u_1(p, f) = \lambda^u_1(L), $$
for $i = 2,$ we have $\lambda^u_1(p, f) + \lambda^u_2(p, f)  =  \lambda^u_1(L) + \lambda^u_2(L),$ since $\lambda^u_1(p, f) = \lambda^u_1(L),$ we get
$$\lambda^u_2(p, f) = \lambda^u_2(L).$$
Analogously $\lambda^u_i(p, f) = \lambda^u_i(L), i =1, \ldots, n.$

Taking the  inverses, we obtain $$\lambda^s_i(p, f) = \lambda^s_i(L), i =1, \ldots, k,$$
so, $f$ and $L$ has the same periodic data, by \cite{Go} and \cite{SY}, the maps $f$ and $L$ are $C^{1+ \varepsilon}$ conjugated for some $\varepsilon > 0,$  if $f$ is enough  $C^1-$close to $L.$

\end{proof}

As in Corollary 5.3 we have.

\begin{corollary} Let $L : \mathbb{T}^d \rightarrow \mathbb{T}^d  $ be as in Theorem \ref{teo4}. If $f$ is a $C^r, r \geq 2,$ diffeomorphism, $C^1-$close to $L $ and every $x \in \mathbb{T}^d  $ is regular point of $f,$ then $f$ and $L$ are $C^{1+\varepsilon}-$conjugated, for some $\varepsilon > 0. $
\end{corollary}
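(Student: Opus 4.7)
The plan is to reduce this corollary to Theorem \ref{teo4} by proving the contrapositive: if the periodic data fails to be constant, then some point must fail to be regular. Once constant periodic data is established, Theorem \ref{teo4} applies directly (since $f$ is $C^1$-close to $L$), and gives the desired $C^{1+\varepsilon}$ conjugacy.

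To establish the key reduction, I would argue as in the corollary at the end of Section 5. Since every transitive Anosov diffeomorphism has the specification property (Bowen), $f$ satisfies specification. Suppose, for contradiction, that some direction has non-constant periodic exponents: that is, there exist periodic points $p, q \in Per(f)$, a choice of $\ast \in \{s,u\}$, and an index $i$ such that $\lambda^{\ast}_i(p,f) < \lambda^{\ast}_i(q,f)$. The continuous invariant splittings $E^{s,f}_1\oplus \cdots \oplus E^{s,f}_k$ and $E^{u,f}_1 \oplus \cdots \oplus E^{u,f}_n$ guaranteed by Pesin near $L$ make $\varphi(x) := \log\|Df(x)|E^{\ast,f}_i\|$ a well-defined continuous function on $\mathbb{T}^d$, and its Birkhoff averages at $p$ and $q$ equal $\lambda^{\ast}_i(p,f)$ and $\lambda^{\ast}_i(q,f)$ respectively.

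Using specification, for a fixed small $\varepsilon>0$ I would then build a point $z$ whose orbit $\varepsilon$-shadows, in succession, longer and longer segments alternating between the orbit of $p$ and the orbit of $q$: choose lengths $k_1 \ll k_2 \ll \cdots$ growing fast enough that the relaxation times become negligible. By continuity of $\varphi$, the Birkhoff averages $\frac{1}{n}\sum_{j=0}^{n-1} \varphi(f^j(z))$ approach $\lambda^{\ast}_i(p,f)+O(\varepsilon)$ along the subsequence ending at the $p$-segments and approach $\lambda^{\ast}_i(q,f)-O(\varepsilon)$ along the subsequence ending at the $q$-segments. Since $\varepsilon$ can be taken small enough that these two targets remain separated, the Birkhoff averages of $\varphi$ at $z$ do not converge, so $z$ is not a regular point of $f$, contradicting the hypothesis.

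The main obstacle is making this oscillation argument genuinely produce non-convergence \emph{in each one-dimensional subbundle} $E^{\ast,f}_i$ separately, rather than only for the total expansion in $E^u_f$ or $E^s_f$. This requires continuity (and $Df$-invariance) of each subbundle $E^{\ast,f}_i$, which holds because $f$ is $C^1$-close to the diagonalizable irreducible $L$ and therefore inherits the fine splitting, as in the preamble to Theorem \ref{teo4}. Once this is set up, the specification construction applies index by index, forcing $\lambda^{\ast}_i(p,f)=\lambda^{\ast}_i(q,f)$ for all $p,q \in Per(f)$, all $\ast$, and all $i$. With constant periodic data in hand, Theorem \ref{teo4} yields the $C^{1+\varepsilon}$ conjugacy between $f$ and $L$.
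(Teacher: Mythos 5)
Your proposal is correct and follows essentially the same route as the paper: the paper simply states that the proof is similar to its earlier specification-based corollary, namely that non-constant periodic data in some invariant direction would, via Bowen's specification property, produce a point whose Birkhoff averages oscillate and hence is not regular, after which Theorem \ref{teo4} applies. Your additional care in checking that the argument works for each one-dimensional subbundle $E^{\ast,f}_i$ separately (using the continuity of the fine splitting for $f$ near $L$) is exactly the detail the paper leaves implicit.
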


The proof is similar to Corollary 5.3.

\end{document}